\newtheorem{theorem}{Theorem}[section]
\newtheorem{proposition}[theorem]{Proposition}
\newtheorem{remark}[theorem]{Remark}
\newtheorem{lemma}[theorem]{Lemma}
\newtheorem{corollary}[theorem]{Corollary}
\newtheorem{definition}[theorem]{Definition}
\title{Smallest eigenvalue distributions for two classes of $\beta$-Jacobi ensembles} 
\author{Ioana Dumitriu\footnote{Department of Mathematics, University of Washington, Seattle, WA. E-mail: dumitriu@math.washington.edu}}
\begin{document}

\maketitle

\abstract{We compute the exact and limiting smallest eigenvalue distributions for a class of $\beta$-Jacobi ensembles not covered by previous studies. In the general $\beta$ case, these distributions are given by multivariate hypergeometric ${}_2F_{1}^{2/\beta}$ functions, whose behavior can be analyzed asymptotically for special values of $\beta$ which include $\beta \in 2\mathbb{N}_{+}$ as well as for $\beta = 1$. Interest in these objects stems from their connections (in the $\beta = 1,2$ cases) to principal submatrices of Haar-distributed (orthogonal, unitary) matrices appearing in randomized, communication-optimal, fast, and stable algorithms for eigenvalue computations \cite{DDH07}, \cite{BDD10}.}

\section{Introduction}

This paper was born from the author's interest in the following problem.

\vspace{.5cm}

\noindent \textbf{Motivating Problem (MP)}. Given an $n \times n$ Haar-distributed orthogonal or unitary matrix $U_n$, choose a principal submatrix $M_r$ formed by $r$ rows and $r$ columns, $r \leq n/2$. Since the Haar (uniform) distribution is invariant under permutations of rows and columns, we might as well assume that $M_r$ is the upper-left corner principal submatrix (in Matlab notation, $M_r = U_n(1:r, 1:r)$). 
\begin{itemize}
\item[1.] What is the distribution of the smallest singular value of $M_r$? 
\item[2.] If $r, n \rightarrow \infty$, how does the singular value asymptotically depend on $n$ and $r$?
\item[3.] In particular, what kind of asymptotics do we get when $r = \Theta(n)$?
\end{itemize}

\vspace{.5cm}

The \textbf{MP} arose in the author's work in scientific computing, namely on (parallelizable, fast, and stable) randomized algorithms \cite{DDH07, BDD10}, since the quality of the randomized rank-revealing decomposition \textbf{RURV} presented there depends on the order of the said smallest singular value distribution. The main interest in that work is for the case when $n$ and $r$ are extremely large (the matrix $U_n$ itself does not fit into the processor's cache memory); hence Question 2. The reason why Question 3 is of particular importance is because, in that context, $r$ represents the split following a ``Divide-and-Conquer'' step in an eigenvalue algorithm (and we must expect that $r$ will be a fraction of the total number of eigenvalues $n$). 

The distribution, naturally, depends on whether the matrix is orthogonal or unitary. Question 1 relates to work by Collins \cite{collins03, collins05}. Particular instances of Question 2 have been solved by Jiang \cite{jiang09a, jiang09b}, who, when $r = o(n / \log n)$, has shown the much more general result that $M_r M_r^{T}$ converges entry-by-entry to a square Wishart distribution (for which the extremal eigenvalue asymptotics are known, see \cite{edelman89a}). 

We set out to examine Question 3, via Question 2; in the process, it transpired that Questions 2 and 3 both can be dealt with simultaneously. We have obtained thus exact answers for all $r$, $n$, as well as asymptotics for both $r$ fixed, $n$ growing to infinity, and for $r,n$ growing to infinity \emph{in any way}, including when $r = \Theta(n)$; then we noted that all results generalize in a way which we explain below to yield various asymptotics of extremal eigenvalues of special $\beta$-Jacobi ensembles. 

The $\beta$-Jacobi ensemble, defined below for general $\beta>0$, was the key to solving the \textbf{MP}.
\begin{definition} \label{defJ}
The numbers $\lambda_1, \lambda_2, \ldots, \lambda_m \in [0,1]$ are $\beta$-Jacobi distributed with parameters $a, b >-1$ if their joint distribution is given by 
\[
f_{\beta, a,b, m}(\lambda_1, \ldots, \lambda_m) =  \frac{1}{c_{\beta, a, b, m}} \prod_{i=1}^m \lambda_i^{\frac{\beta}{2}(a+1) - 1} ~(1- \lambda_i)^{\frac{\beta}{2}(b+1) - 1}~~\prod_{i <j} |\lambda_i - \lambda_j|^{\beta}~,
\]
with $c_{\beta, a, b, m}$ being the value of the corresponding Selberg integral, i.e.,
\begin{eqnarray}
c_{\beta, a, b, m} &= &\prod_{j=1}^{m} \frac{\Gamma\left (\frac{\beta}{2} ( a+j) \right) \Gamma \left ( \frac{\beta}{2} (b+j) \right) \Gamma \left ( 1+ \frac{\beta}{2} j\right)}{ \Gamma \left ( 1 + \frac{\beta}{2} \right) \Gamma \left ( \frac{\beta}{2} (a+b+m + j) \right)}~. \label{cee}
\end{eqnarray}
\end{definition}

When $\beta = 1,2$ (corresponding to the real, respectively complex cases), the scaled eigenvalues of $M_rM_r^{T}$ are $\beta$-Jacobi distributed (see Collins \cite{collins03, collins05},  Sutton \cite{sutton05}). 
This result is in fact more general, and we give here only the form in which we will use it in Section \ref{smed}.

\begin{proposition} \label{jac_distr} [following Theorem 5.1.3, \cite{sutton05}] 
Let $U_n$ be an $n \times n$ Haar-distributed orthogonal or unitary matrix, and let $M_r$ be the $r \times r$ upper left corner of $U_n$. Let $x_1, x_2, \ldots, x_r$ be the eigenvalues of $Y Y^{T}$. Then $\lambda_1:= x_1/\beta, \lambda_2:=x_2/\beta, \ldots, \lambda_r:=x_r /\beta$ follow the $\beta$-Jacobi distribution with parameters $a = 0$ and $b = n-2r$ ($\beta = 1$ in the real case, $\beta = 2$ in the complex one). 
\end{proposition}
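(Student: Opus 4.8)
\emph{Proof strategy.} The plan is to generate the first $r$ columns of $U_n$ explicitly from a Gaussian matrix, truncate to the top $r\times r$ block $M_r$, and recognize the eigenvalues of $M_rM_r^{*}$ (the quantity written $YY^{T}$ in the statement, up to the $\beta$-dependent normalization built into $Y$, which is tracked at the end) as those of a matrix-variate Beta / MANOVA ensemble, whose eigenvalue density is classically the $\beta$-Jacobi density; the parameters then read off as $a=0$ and $b=n-2r$.

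First I would use that the $n\times r$ matrix $Q$ formed by the first $r$ columns of $U_n$ is Haar-distributed on the real (resp.\ complex) Stiefel manifold, hence can be realized as $Q=G(G^{*}G)^{-1/2}$ with $G$ an $n\times r$ matrix of i.i.d.\ standard real (resp.\ complex) Gaussian entries: indeed $G\mapsto G(G^{*}G)^{-1/2}$ is left-invariant under $O(n)$ (resp.\ $U(n)$), which together with transitivity characterizes the Haar measure on the Stiefel manifold. Partition $G=\left(\begin{smallmatrix}G_1\\ G_2\end{smallmatrix}\right)$ with $G_1$ the top $r\times r$ block and $G_2$ the bottom $(n-r)\times r$ block. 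Then $M_r=G_1(G^{*}G)^{-1/2}$, so
\[
M_rM_r^{*}\;=\;G_1\,(G_1^{*}G_1+G_2^{*}G_2)^{-1}\,G_1^{*},
\]
and with $W_1:=G_1^{*}G_1$, $W_2:=G_2^{*}G_2$ (independent $r\times r$ Wishart matrices on $r$ and $n-r$ degrees of freedom respectively) the eigenvalues $x_1,\dots,x_r$ of $M_rM_r^{*}$ are exactly the eigenvalues of $W_1(W_1+W_2)^{-1}$. Since $G_1$ is square and a.s.\ nonsingular, all $r$ of them are accounted for, they are a.s.\ distinct, and they lie in $(0,1)$.

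Next I would invoke — or re-derive via the Jacobian of the symmetric (resp.\ Hermitian) eigenvalue decomposition together with integration of Haar measure over $O(r)$ (resp.\ $U(r)$) — the classical formula for the joint eigenvalue density of the matrix Beta distribution built from Wisharts on $\nu_1$ and $\nu_2$ degrees of freedom in dimension $m$: it is proportional to $\prod_i \mu_i^{\frac{\beta}{2}(\nu_1-m+1)-1}(1-\mu_i)^{\frac{\beta}{2}(\nu_2-m+1)-1}\prod_{i<j}|\mu_i-\mu_j|^{\beta}$ (with $\beta=1$ real, $\beta=2$ complex), the normalizing constant being the corresponding Selberg integral. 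Plugging $m=r$, $\nu_1=r$, $\nu_2=n-r$ turns the exponent of $\mu_i$ into $\frac{\beta}{2}\cdot 1-1=\frac{\beta}{2}(a+1)-1$ with $a=0$, and the exponent of $(1-\mu_i)$ into $\frac{\beta}{2}(n-2r+1)-1=\frac{\beta}{2}(b+1)-1$ with $b=n-2r$; the hypothesis $r\le n/2$ ensures $b\ge 0>-1$, so Definition \ref{defJ} applies verbatim and the constant is $c_{\beta,a,b,r}$ of \eqref{cee}. The rescaling $\lambda_i=x_i/\beta$ in the statement is exactly the change of normalization convention under which the $x_i$ are recorded, and is absorbed into the Jacobian constants; alternatively one simply specializes Theorem 5.1.3 of \cite{sutton05} at this point.

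The main obstacle is the eigenvalue-density step above: integrating the Haar measure on $O(r)$ (resp.\ $U(r)$) out of the matrix Beta density to land on the symmetric Jacobi density carrying the Vandermonde factor $\prod_{i<j}|\mu_i-\mu_j|^{\beta}$. This is the one place where the real and complex cases genuinely diverge and where the exponents $\frac{\beta}{2}(a+1)-1$ and $\frac{\beta}{2}(b+1)-1$ are produced; everything before it is elementary linear algebra and everything after is matching constants against the Selberg integral \eqref{cee}. A route that avoids Gaussians altogether — applying the $CS$ decomposition to the full matrix $U_n$ directly — reduces to the same analytic content, since the Jacobian of the $CS$ decomposition carries equal weight; for that reason I would favor the Wishart reduction sketched here.
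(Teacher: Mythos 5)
Your argument is correct, but it is worth noting that the paper offers no proof of this proposition at all: it is stated ``following Theorem 5.1.3 of Sutton's thesis,'' where the result is obtained from the CS decomposition of the full Haar matrix $U_n$ (writing $U_n$ in $2\times 2$ block form and computing the Jacobian of the CS decomposition of Haar measure, so that the squared cosines of the principal angles appear directly with the Jacobi weight). Your route is genuinely different and entirely classical: realizing the first $r$ columns as $G(G^{*}G)^{-1/2}$ with $G$ Gaussian, reducing $M_rM_r^{*}$ to $W_1(W_1+W_2)^{-1}$ with independent Wisharts on $r$ and $n-r$ degrees of freedom, and quoting the MANOVA eigenvalue density (Muirhead Thm.~3.3.4 in the real case, its complex analogue via James' integral over $U(r)$). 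The identification of parameters $a=\nu_1-m=0$, $b=\nu_2-m=n-2r$ is correct in both cases, and your use of $r\le n/2$ to guarantee $b\ge 0$ and the nonsingularity of $W_2$ is exactly where that hypothesis enters. What your approach buys is that the only nontrivial analytic step is a textbook result, and the formula $b=\nu_2-m$ makes the otherwise opaque value $n-2r$ transparent; what the CS route buys is that it treats the whole of $U_n$ symmetrically and produces Sutton's bidiagonal model as a byproduct, which is what the paper actually uses in Section \ref{numex}. The one point you gloss over is the $\lambda_i=x_i/\beta$ rescaling: as printed, $Y$ is undefined and dividing eigenvalues of $M_rM_r^{*}\in[0,1]$ by $\beta$ would push them off the support $[0,1]$, so the statement only makes sense if $Y=\sqrt{\beta}\,M_r$ (so that the $\lambda_i$ are precisely the eigenvalues of $M_rM_r^{*}$); your reading is the right one, but it deserves an explicit sentence rather than the phrase ``absorbed into the Jacobian constants,'' since a multiplicative rescaling of eigenvalues supported in $[0,1]$ cannot be absorbed into a normalizing constant.
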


Thus, the most important observation was that if one wants to find the answer to the \textbf{MP}, one should study the extremal eigenvalues of $\beta$-Jacobi ensembles. 

\subsection{$\beta$-Jacobi ensembles and extremal eigenvalue distributions: previous work}

The $\beta$-Jacobi ensembles (for $\beta = 1,2$) have been widely studied as the MANOVA real and complex distributions (see for example Muirhead \cite{muirhead82a} for the real case), and general $\beta$-analogues have been studied in the context of Selberg integrals \cite{selberg}, \cite{askey89a}, \cite{kaneko}, \cite{kadell_jacks} and log-gas theory (for a comprehensive study of the latter, see Forrester \cite{forr_book}). They have been given increasingly simple matrix models by Lippert \cite{ross}, Killip and Neciu \cite{killip-nenciu}, and most recently by Sutton \cite{sutton05} (the latter are the ones used here, in experiments). 

Many things are known about these distributions; exact results include the cumulative distribution functions (CDFs) for extremal eigenvalues \cite{kd08} for all $\beta$-Jacobi ensembles, as well as extremal eigenvalue asymptotics for special classes of $\beta$-Jacobi ensembles (see Jiang \cite{jiang09c}). In the latter paper, the author shows that the asymptotics for the largest eigenvalues are the same as for (special) $\beta$-Laguerre and $\beta$-Hermite ensembles, as heuristically shown by Edelman and Sutton \cite{sutton07} and then rigorously proved by Ram\'irez, Rider, and Vir\'{a}g \cite{rrv09} (such asymptotics are now known as the ``$\beta$-Tracy-Widom laws'', and they are ``soft-edge"). Finally, for $\beta = 1,2$, Johnstone \cite{johnstone08} has shown that, for a very large class of real or complex Jacobi ensembles, the extremal eigenvalues (at the ``hard edge") have regular Tracy-Widom \cite{tracy_widom_largest} asymptotics. For calculations of correlations functions (in general) and both kinds of edge-behavior (for $\beta \in 2\mathbb{N}$) and other useful information, see Forrester \cite{forr_book}).

In this paper, we examine the hard-edge $(a/r \rightarrow 0)$ behavior of certain Jacobi ensembles not considered before, and we find that (perhaps unsurprisingly) it agrees with the hard-edge behavior of certain corresponding Laguerre ensembles, some of whose limiting behavior was previously known, at least for certain values of $\beta$. The difference between the hard edge and the soft edge regime in this context is easy to explain; it essentially boils down to $(a/r,~b/r)$ converging to \emph{positive} constants (the latter), and one of them converging to $0$ (the former). 

For example, the Tracy-Widom asymptotics proved by Johnstone hold for real and complex Jacobi ensembles $(\beta = 1, 2)$ in the soft edge regime, whose extremal eigenvalues converge to numbers in $(0,1)$. In the cases of interest for us, as we will see, $a$ is actually constant, and thus $a/r \rightarrow 0$ (hard edge) and the smallest eigenvalues converge to $0$; unsurprisingly, the Tracy-Widom asymptotics are not valid here. In fact, we see the same phenomenon as for the related Laguerre ensembles: those ensembles for whom the extremal eigenvalues converge away from $0$ (soft edge) exhibit Tracy-Widom fluctuation asymptotics, but those for which the smallest eigenvalue converges to $0$ (hard edge) have either exponential ones (see, e.g., Edelman \cite{edelman89a}, $\beta = 1,2$), or given by a Bessel kernel (see Kuijlaars and Vanlessen \cite{kuijlaars_vanlessen02a}, for work on modified Jacobi ensembles when $\beta = 2$). 

\begin{remark} It is worth mentioning that there is a well-known simple procedure for ``turning'' $(a,b)$ $\beta$-Jacobi ensembles with eigenvalues $\lambda_1, \ldots, \lambda_n$ into $\beta$-Laguerre ones with parameter $a$ (the latter are $\beta$ generalizations of Wishart), by letting the parameter $b \rightarrow \infty$, and scaling the ensemble by letting $\lambda_i  \rightarrow x_i /b$ for all $i$.  See Forrester \cite[4.7.1]{forr_book}; the limiting procedure described there for the normalization constant also applies to the distributions themselves.

As a corrolary, this means that our asymptotical results also give both the exact and the asymptotical smallest eigenvalue distributions for the corresponding hard edge distributions in $\beta$-Laguerre ensembles, whose limits have been known to exist in terms of a stochastic Bessel operator (conjectured by Edelman and Sutton in \cite{sutton07} and proved by Ram\'irez and Rider in \cite{ramirez_rider09}). Many of these limits (for various $\beta$ and $a$) have been known; the cases we deal with here seem to be more general than anything else we were able to find in the literature (although for $\beta =1$ or an even integer, these cases are known \cite{forr_book}).

Although intuitively it makes sense that the $\beta$-Jacobi hard edge would always correspond to the $\beta$-Laguerre hard edge, one cannot move in the opposite direction, and claim that knowing the asymptotics for the latter will yield those for the former, since in that case the limiting procedure would have to be ordered: first $b \rightarrow \infty$, them $m \rightarrow \infty$. As such, our results, for which $m$ and $b+m$ tend to $\infty$ at any (dependent \emph{or} independent) rates, are more general. \end{remark}

The method employed here (use of generalized hypergeometric series) is the same method that we employed in \cite{kd08}; however, this time we apply it directly to  the probability density functions (PDFs) rather than the cumulative distribution functions (CDFs), and as a result the hypergeometric expressions we obtain are simpler, and allow us to do an asymptotical analysis, while the expressions obtained in \cite{kd08} do not yield easily to asymptotic study.

The rest of the paper is organized as follows. In Section \ref{hyp}, we define hypergeometric functions, Jack polynomials, and associated Pochhammer symbols. In Section \ref{smed}, we give a general hypergeometric expression for extremal eigenvalues distributions of Jacobi ensembles, and in Subsections \ref{case1} and \ref{case2} we calculate asymptotics in some special cases which correspond to generalizations of $\beta = 1$ and $\beta = 2$. In particular, this is where we answer the questions asked in the first paragraph. Finally, in Section \ref{numex} we illustrate  numerically how the distributions of Subsections \ref{case1}, \ref{case2} agree with Monte Carlo experiments. 

\section{Hypergeometric functions} \label{hyp}

Hypergeometric functions of multiple variables, also known as generalized hypergeometric functions, can be defined as formal power series, just like their classical counterparts. In the multiple variable case, the role of the monomials is taken by Jack polynomials indexed by partitions; if $X:= (x_1, \ldots, x_m)$, the Jack polynomial $C_{\kappa}^{\beta}(X)$ defined for a partition $\kappa$ and the variables $X$ is a symmetric, homogeneous polynomial. These polynomials satisfy various recurrences and systems of equations. For more information, see Chapter 12 of \cite{forr_book}, which is entirely dedicated to them. For computations of such quantities and connections to $\beta$-ensemble statistics, see the MOPs software package and associated paper \cite{dumitriu07b}, as well as Koev's hypergeometric function package and associated paper \cite{plamen_hyp}. In the cases $\beta = 1,2$, these polynomials are versions of the zonal polynomials (see \cite{muirhead82a}) and respectively scaled versions of the well-known Schur polynomials. 

To define Jack polynomials, we must first introduce two new quantities: the generalized Pochhammer symbol (or generalized rising factorial), and the symbol $j_{\kappa}^{\beta}$.
\begin{definition} \label{pocch_jk}
For any $\kappa = (k_1, \ldots, k_m)$ a partition of $k$ and for any $\beta>0$, 
\[
(a)_{\kappa}^{\beta}  \equiv \prod_{i = 1}^m \left ( a - \frac{\beta}{2}(i-1) \right)_{k_i}~,
\]
where $(x)_{k_i}$ is the classical rising factorial, $(x)_{k_i} = \Gamma(x+k_i)/\Gamma(x)$.  In addition, given the diagram of the partition $\kappa$ (see Figure \ref{parti}), we define for every square $s$ the ``arm-length'' $a_{\kappa}(s)$ as the number of squares to the right of $s$, and the ``leg-length'' of $s$ as the number of squares below $s$.  Then 
\[
j_{\kappa}^{\beta} \equiv  \prod_{s \in \kappa} \left (l_{\kappa}(s) + \frac{2}{\beta}(1+ a_{\kappa}(s) \right) \left ( l_{\kappa}(s) + 1 + \frac{2}{\beta} a_{\kappa}(s) \right )~.
\]
\end{definition}

\begin{figure}[ht]
\begin{center}
\includegraphics[height = 1.25in]{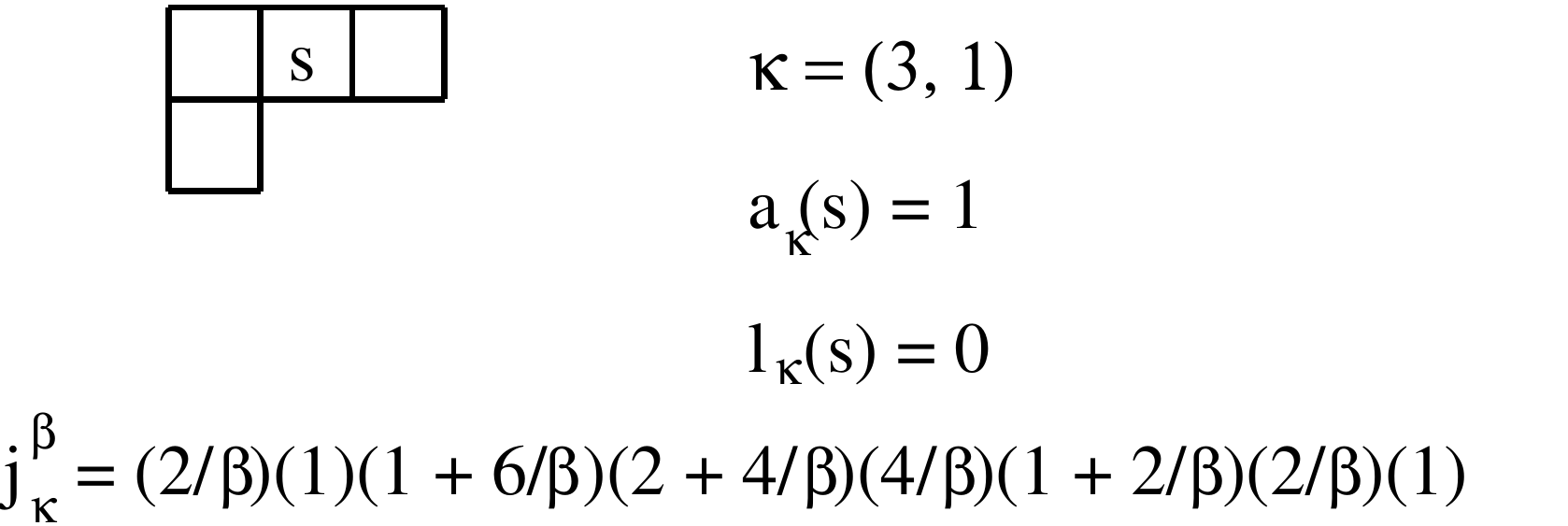}
\caption{Arm-length and leg-length for the square $s$, along with $j_{[3,1]}^{\beta}$.} \label{parti}
\end{center}
\end{figure}

The Jack polynomials have various normalizations; the one that we use here has the following two properties: for any integer $k$ and for $\kappa \vdash k$ a partition of $k$, and denoting by $I_m$ the vector of $m$ $1$s,
\begin{eqnarray}
\sum_{\kappa \vdash k} C_{\kappa}^{\beta} (X) & = & (x_1 + \ldots x_m)^k~, \nonumber \\
C_{\kappa}^{\beta}(I_m)  & = & \frac{\left ( \frac{2}{\beta} \right)^{2k} ~k!~ \left( \frac{m\beta}{2} \right)_{\kappa}}{j_{\kappa}^{\beta}}~. \label{jack_id}
\end{eqnarray}

Finally, we can now give the definition of the hypegeometric function of multiple variables, following \cite{forr_book}.

\begin{definition} The hypergeometric series ${}_pF_{q}$ is given by
\[
{}_pF_{q}^{\beta} (a_1, \ldots, a_p; b_1, \ldots, b_q; X) \equiv \sum_{k=0}^{\infty} \sum_{\kappa \vdash k} \frac{1}{k!} ~\frac{(a_1)_{\kappa}^{\beta} \ldots (a_p)_{\kappa}^{\beta}}{(b_1)_{\kappa}^{\beta} \ldots (b_q)_{\kappa}^{\beta}} ~ C_{\kappa}^{\beta}(X)~.
\]
\end{definition}

For the case  $p \leq q+1$, the series converges everywhere in the hypercube $\{|x_i|< 1, ~1 \leq i \leq n\}$. We will mostly focus on the cases $p = 2, q = 1$ (sometimes with $m=1$, in which case we recover the classical ${}_2F_{1}$ hypergeometric function of one variable), and sometimes $p = q= 1$ or $p = 0, ~q = 1$. It is worth mentioning two more facts about hypergeometric series, the first one which we quote from \cite{forr_book} (formula 13.5 with $p =2, q=1$), and the second one being an immediate consequence of the first one (given the homogeneity of Jack polynomials):
\begin{eqnarray}
\lim_{a \rightarrow \infty} ~{}_2F_{1}^{\beta}(a, b; c; X/a) &=& {}_1F_{1}^{\beta}(b;c;X)~, ~~\mbox{and} \label{fa1}\\
\lim_{a, b \rightarrow \infty} ~{}_2F_{1}^{\beta}(a,b;c;X/(ab)) & = & {}_0F_{1}^{\beta}(c; X)~. \label{fa2}
\end{eqnarray}

\section{Smallest eigenvalue distributions} \label{smed}

\subsection{Most general case} \label{mgc}

We are interested in finding an expression for the probability density function for the smallest (respectively, largest) eigenvalues of the Jacobi ensemble of parameters $a$ and $b$ (see Definition \ref{defJ}).


We would like to obtain the marginal distribution $f_{min}$ for the smallest eigenvalue; assume $\lambda = \lambda_m$. To that extent, we will integrate out all other variables but $\lambda_m$, to obtain 
\begin{eqnarray*}
f_{min}(\lambda)  &=& \frac{m}{c_{\beta, a,b, m}} ~\lambda^{\frac{\beta}{2}(a+1)-1} ( 1- \lambda)^{\frac{\beta}{2}(b+1) -1} ~ \times \\
&& ~~~\int_{[\lambda, 1]^{(m-1)}} \prod_{i=1}^{m-1} \lambda_i^{\frac{\beta}{2}(a+1)-1} ~(1-\lambda_i)^{\frac{\beta}{2}(b+1) - 1} (\lambda_i - \lambda)^{\beta} ~ \cdot ~\prod_{i<j} |\lambda_i - \lambda_j|^{\beta} ~d\lambda_1 \ldots d\lambda_{m-1}~.
\end{eqnarray*}

Note that the $m$ in the first numerator of the right hand side comes from choosing $\lambda_m$ as the smallest eigenvalue.

We can now make the variable change $x_i = \frac{1 - \lambda_i}{1-\lambda}$, which maps $\lambda_i$ from $[\lambda, 1]$ to $[0,1]$. The distribution  thus becomes
{\footnotesize 
\begin{eqnarray}
f_{min}(\lambda)  &=& \frac{m}{c_{\beta, a,b, m}} ~\lambda^{\frac{\beta}{2}(a+1)-1} ( 1- \lambda)^{\frac{\beta}{2}m (b+m) -1} ~\times \nonumber \\
&&  \int_{[0, 1]^{(m-1)}} \prod_{i=1}^{m-1}  x_i^{\frac{\beta}{2} (b+1)-1} ~ (1-x_i)^{\beta} ~ (1- x_i (1-\lambda))^{\frac{\beta}{2}(a+1) - 1}~ \prod_{i<j} |x_i - x_j|^{\beta} ~dx_1 \ldots dx_{m-1}~. \label{primo}
\end{eqnarray}}

We will now give a different expression for the integral on the right hand side by using equation (13.12) from \cite{forr_book}. For our particular case, this result specializes to the following Proposition:
\begin{proposition} \label{forr_2f1} We have
\begin{eqnarray*}
\frac{1}{c_{\beta, b, 1+2/\beta, m-1}} \int_{[0,1]^{(m-1)}} \prod_{i=1}^{m-1} x_i^{\frac{\beta}{2} (b+1)-1} ~ (1-x_i)^{\beta}  ~(1- x_i (1-\lambda))^{\frac{\beta}{2}(a+1) - 1}~ \prod_{i<j} |x_i - x_j|^{\beta}~dx_1 \ldots dx_{m-1} & = &  \\
&&  \hspace{-12.5cm} {}_2F_{1}^{2/\beta} ( 1 - \frac{\beta}{2}(a+1), \frac{\beta}{2}(b+m-1); \frac{\beta}{2}(b+2m-1)+1; (1-\lambda) I_{m-1})~,
\end{eqnarray*}
where $c_{\beta, b, 1 + 2/\beta, m-1}$ is given by \eqref{cee} and $I_{m-1}$ has the same meaning as in \eqref{jack_id}.
\end{proposition}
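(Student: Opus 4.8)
The plan is to recognize the left-hand integral as a specialization of the multivariate Euler-type integral representation of ${}_2F_1^{(\alpha)}$ (a Jack-polynomial analogue of Euler's classical integral, due to Kaneko), namely equation (13.12) of \cite{forr_book}, and then to match parameters. That representation reads, for $N$ variables, Jack parameter $\alpha$, and $|z|<1$,
\begin{equation*}
\frac{1}{S_N}\int_{[0,1]^N}\prod_{j=1}^N t_j^{g_1-1}(1-t_j)^{g_2-1}\,(1-zt_j)^{-s}\prod_{j<k}|t_j-t_k|^{2/\alpha}\,dt_1\cdots dt_N={}_2F_1^{(\alpha)}\!\Big(s,\;g_1+\tfrac{1}{\alpha}(N-1);\;g_1+g_2+\tfrac{2}{\alpha}(N-1);\;zI_N\Big),
\end{equation*}
where $S_N$ is the corresponding Selberg normalization; the case $N=1$ is the classical Euler integral for ${}_2F_1$, which serves as a sanity check.

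Next I would set up the dictionary with the integral in the statement: take $N=m-1$ and $\alpha=2/\beta$, so that $2/\alpha=\beta$ matches the Vandermonde exponent $\prod_{i<j}|x_i-x_j|^{\beta}$. Reading off the powers gives $g_1=\tfrac{\beta}{2}(b+1)$ (from $x_i^{\frac{\beta}{2}(b+1)-1}$) and $g_2=\beta+1$ (from $(1-x_i)^{\beta}$). Here one checks that these are exactly the exponents appearing in the $(\beta,\,b,\,1+\tfrac{2}{\beta},\,m-1)$-Jacobi weight of Definition \ref{defJ}, since $\tfrac{\beta}{2}\big(1+\tfrac{2}{\beta}+1\big)-1=\beta$; hence the Selberg constant $S_{m-1}$ produced by the formula is precisely $c_{\beta,\,b,\,1+2/\beta,\,m-1}$ as given by \eqref{cee}. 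Finally, $z=1-\lambda$ and $-s=\tfrac{\beta}{2}(a+1)-1$, i.e.\ $s=1-\tfrac{\beta}{2}(a+1)$.

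Substituting into the three slots of ${}_2F_1^{(\alpha)}$ then finishes the proof: the first upper parameter is $s=1-\tfrac{\beta}{2}(a+1)$; the second is $g_1+\tfrac{1}{\alpha}(N-1)=\tfrac{\beta}{2}(b+1)+\tfrac{\beta}{2}(m-2)=\tfrac{\beta}{2}(b+m-1)$; and the lower parameter is $g_1+g_2+\tfrac{2}{\alpha}(N-1)=\tfrac{\beta}{2}(b+1)+(\beta+1)+\beta(m-2)=\tfrac{\beta}{2}(b+2m-1)+1$ — exactly the parameters in the statement, with superscript $2/\beta=\alpha$. The step I would be most careful about is the bookkeeping of conventions: \cite{forr_book} may index the hypergeometric series and normalize the Jack polynomials in a manner that differs from \eqref{jack_id}, so one must confirm that the Jack parameter paired with $\prod|x_i-x_j|^{\beta}$ is indeed $\alpha=2/\beta$ (consistent with zonal polynomials at $\beta=1$ and Schur functions at $\beta=2$) and that the resulting function carries the superscript $2/\beta$. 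One also needs $\lambda>0$ so that $1-\lambda$ lies in the convergence region $\{|x_i|<1\}$; this is harmless, since $f_{min}$ is evaluated only for $\lambda\in(0,1)$, and the identity then extends to the full admissible range of parameters by analytic continuation.
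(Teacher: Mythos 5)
Your proposal is correct and follows exactly the route the paper takes: the paper simply invokes equation (13.12) of Forrester's book (the Jack-polynomial Euler/Selberg correlation integral) and asserts that it specializes to the stated identity, and your parameter dictionary ($N=m-1$, $\alpha=2/\beta$, $g_1=\tfrac{\beta}{2}(b+1)$, $g_2=\beta+1$, $s=1-\tfrac{\beta}{2}(a+1)$, $z=1-\lambda$) carries out that specialization correctly, including the identification of the Selberg constant with $c_{\beta,b,1+2/\beta,m-1}$. Your caution about the Jack-parameter superscript convention is well placed, since the paper itself is not entirely consistent on this point (compare the superscripts in Theorem \ref{general_min} and Lemma \ref{hyp_red_case1}), but your reading ($\alpha=2/\beta$ paired with Vandermonde exponent $\beta$) is the intended one.
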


Thus, we obtain the following theorem:

\begin{theorem} \label{general_min}
The probability distribution of the smallest eigenvalue $\lambda$ of the $\beta$-Jacobi ensembles of parameters $a$ and $b$ and size $m$ is given by 
\begin{eqnarray}
f_{min} (\lambda) & =& C_{\beta, a, b, m} ~~\lambda^{\frac{\beta}{2}(a+1)-1}~(1- \lambda)^{\frac{\beta}{2}m(b+m)-1} ~\times  \nonumber \\
&& \hspace{1cm}  {}_2F_{1}^{2/\beta} ( 1 - \frac{\beta(a+1)}{2}, \frac{\beta(b+m-1)}{2}; \frac{\beta(b+2m-1)}{2}+1; (1-\lambda) I_{m-1})~, \label{expr}
\end{eqnarray}
with
\begin{eqnarray*}
C_{\beta, a,b, m} & = & \frac{m~ c_{\beta, b, 1+ 2/\beta, m-1}}{c_{\beta, a, b, m}} ~.
\end{eqnarray*}
\end{theorem}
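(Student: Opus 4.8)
\emph{Proof proposal.} The plan is to combine two facts already assembled above: the explicit marginal integral \eqref{primo} for $f_{min}$, and Proposition~\ref{forr_2f1}, which evaluates the $(m-1)$-fold integral appearing in \eqref{primo} as a multivariate ${}_2F_1^{2/\beta}$. I would simply substitute the latter into the former and absorb the Selberg constant into the prefactor. No new identity is needed for the theorem itself once those two pieces are in hand.

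Carrying this out: by Proposition~\ref{forr_2f1} the integral over $[0,1]^{m-1}$ in \eqref{primo} equals
\[
c_{\beta, b, 1+2/\beta, m-1}\;\, {}_2F_{1}^{2/\beta}\!\left(1 - \tfrac{\beta}{2}(a+1),\ \tfrac{\beta}{2}(b+m-1);\ \tfrac{\beta}{2}(b+2m-1)+1;\ (1-\lambda)\, I_{m-1}\right).
\]
Feeding this back into \eqref{primo}, the powers $\lambda^{\frac{\beta}{2}(a+1)-1}$ and $(1-\lambda)^{\frac{\beta}{2}m(b+m)-1}$ are carried over verbatim, the three hypergeometric arguments are exactly those displayed in \eqref{expr} (only rewritten as $1-\frac{\beta(a+1)}{2}$, $\frac{\beta(b+m-1)}{2}$, $\frac{\beta(b+2m-1)}{2}+1$), and the remaining scalar factor is $\dfrac{m\, c_{\beta, b, 1+2/\beta, m-1}}{c_{\beta, a, b, m}}$, which is precisely $C_{\beta,a,b,m}$. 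This is \eqref{expr}.

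\emph{What genuinely needs checking, and where I expect the main obstacle.} First, the derivation of \eqref{primo}: after fixing $\lambda_m=\lambda$ as the smallest eigenvalue (the combinatorial factor $m$ coming from symmetry of $f_{\beta,a,b,m}$) and applying the substitution $x_i=(1-\lambda_i)/(1-\lambda)$, the one nonroutine step is the bookkeeping of powers of $(1-\lambda)$, which arise from the $m$ univariate weights $(1-\lambda_i)^{\frac{\beta}{2}(b+1)-1}$, the $m-1$ interaction terms $(\lambda_i-\lambda)^\beta=(1-\lambda)^\beta(1-x_i)^\beta$, the Vandermonde $\prod_{i<j\le m-1}|\lambda_i-\lambda_j|^\beta=(1-\lambda)^{\beta\binom{m-1}{2}}\prod_{i<j\le m-1}|x_i-x_j|^\beta$, and the $m-1$ Jacobian factors $d\lambda_i=(1-\lambda)\,dx_i$; these sum to $\frac{\beta m}{2}\bigl((b+1)+(m-1)\bigr)-1=\frac{\beta m}{2}(b+m)-1$, as claimed. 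Second, and this is the substantive input, Proposition~\ref{forr_2f1} must be deduced from Forrester's multivariate Euler integral (formula (13.12) of \cite{forr_book}): one matches the exponent of $1-x_i(1-\lambda)$, namely $\frac{\beta}{2}(a+1)-1=-\bigl(1-\frac{\beta}{2}(a+1)\bigr)$, to the ``$-a$'' exponent there, sets all arguments equal to $1-\lambda$, and identifies the normalizing Selberg integral by observing that the exponent $\beta$ on $(1-x_i)$ equals $\frac{\beta}{2}\bigl((1+\tfrac{2}{\beta})+1\bigr)-1$ while the exponent on $x_i$ equals $\frac{\beta}{2}(b+1)-1$, so that the normalization is $c_{\beta,b,1+2/\beta,m-1}$ in the notation of \eqref{cee}. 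The error-prone point to get right is the transposition $\beta\leftrightarrow 2/\beta$ between the Vandermonde exponent $\beta$ in the integrand and the superscript $2/\beta$ on the resulting hypergeometric function --- the standard duality in these integral representations. With \eqref{primo} and Proposition~\ref{forr_2f1} established, the theorem is immediate, the stated identity holding for $\lambda\in(0,1]$, where the ${}_2F_1^{2/\beta}$ series converges since $p=2\le q+1=2$.
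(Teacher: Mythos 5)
Your proposal is correct and takes essentially the same route as the paper: the theorem is obtained by substituting Proposition~\ref{forr_2f1} into the marginal integral \eqref{primo} and absorbing the Selberg constant $c_{\beta,b,1+2/\beta,m-1}$ into the prefactor $C_{\beta,a,b,m}$. Your exponent bookkeeping for the substitution $x_i=(1-\lambda_i)/(1-\lambda)$, yielding the power $\frac{\beta}{2}m(b+m)-1$ on $(1-\lambda)$, agrees with the paper's derivation of \eqref{primo}.
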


Making the observation that $1- \lambda_{min}$ is the largest eigenvalue $\lambda_{max}$ of the Jacobi ensemble of parameters $b$, $a$, and size $m$, we immediately obtain the following corollary.

\begin{corollary} \label{general_max}
The probability distribution of the largest eigenvalue $\tilde{\lambda}$ of the $\beta$-Jacobi ensembles of parameters $a$ and $b$ and size $m$ is given by 
\begin{eqnarray}
f_{max} (\tilde{\lambda}) & =& \tilde{C}_{\beta, a, b, m} ~~(1-\tilde{\lambda})^{\frac{\beta}{2}(b+1)-1}~\tilde{\lambda}^{\frac{\beta}{2}m(a+m)-1} \times  \nonumber \\
&& \hspace{1cm}  {}_2F_{1}^{2/\beta} ( 1 - \frac{\beta(b+1)}{2}, \frac{\beta(a+m-1)}{2}; \frac{\beta(a+2m-1)}{2}+1; \tilde{\lambda} I_{m-1})~, \label{expr2}
\end{eqnarray}
with
\begin{eqnarray*}
\tilde{C}_{\beta, a, b, m} & = & \frac{m ~c_{\beta, a, 1+ 2/\beta, m-1}}{c_{\beta, a,b, m}} ~.
\end{eqnarray*}
\end{corollary}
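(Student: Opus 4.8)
The plan is to deduce this statement directly from Theorem \ref{general_min} via the elementary observation already flagged in the text: if $\lambda_1 \geq \cdots \geq \lambda_m$ are the eigenvalues of a $\beta$-Jacobi ensemble with parameters $(a,b)$, then $1-\lambda_1, \ldots, 1-\lambda_m$ are the eigenvalues of a $\beta$-Jacobi ensemble with parameters $(b,a)$. First I would verify this symmetry at the level of the joint density in Definition \ref{defJ}: the substitution $\mu_i = 1-\lambda_i$ maps $[0,1]^m$ to itself, the Jacobian is $1$, the factor $\lambda_i^{\frac{\beta}{2}(a+1)-1}(1-\lambda_i)^{\frac{\beta}{2}(b+1)-1}$ becomes $(1-\mu_i)^{\frac{\beta}{2}(a+1)-1}\mu_i^{\frac{\beta}{2}(b+1)-1}$, and the Vandermonde factor $\prod_{i<j}|\lambda_i-\lambda_j|^\beta = \prod_{i<j}|\mu_i-\mu_j|^\beta$ is invariant; hence $f_{\beta,a,b,m}(\lambda) = f_{\beta,b,a,m}(1-\lambda)$ as a density identity, and in particular $c_{\beta,a,b,m} = c_{\beta,b,a,m}$ (which is also visible directly from \eqref{cee} by symmetry of the product in $a \leftrightarrow b$).

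Next I would translate this into a statement about the extreme eigenvalues: the smallest eigenvalue $\lambda_{min}$ of the $(a,b)$-ensemble equals $1 - \lambda_{max}$, where $\lambda_{max}$ is the largest eigenvalue of the $(b,a)$-ensemble. Equivalently, the largest eigenvalue $\tilde\lambda$ of the $(a,b)$-ensemble has the same distribution as $1 - \lambda_{min}$ for the $(b,a)$-ensemble. So I would apply Theorem \ref{general_min} with the parameters $a$ and $b$ interchanged to get the density of $\lambda_{min}$ for the $(b,a)$-ensemble, then push it forward under $\tilde\lambda = 1 - \lambda_{min}$ (again a measure-preserving change of variables on $[0,1]$, so the density is simply reparametrized): substituting $\lambda \mapsto 1-\tilde\lambda$ and swapping $a \leftrightarrow b$ throughout \eqref{expr} turns $\lambda^{\frac{\beta}{2}(a+1)-1}$ into $(1-\tilde\lambda)^{\frac{\beta}{2}(b+1)-1}$, turns $(1-\lambda)^{\frac{\beta}{2}m(b+m)-1}$ into $\tilde\lambda^{\frac{\beta}{2}m(a+m)-1}$, and turns the argument $(1-\lambda)I_{m-1}$ of the hypergeometric function into $\tilde\lambda I_{m-1}$ while the three parameters of ${}_2F_1^{2/\beta}$ become exactly $1 - \frac{\beta(b+1)}{2}$, $\frac{\beta(a+m-1)}{2}$, $\frac{\beta(a+2m-1)}{2}+1$, matching \eqref{expr2}. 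Finally, the normalization constant $C_{\beta,b,a,m} = \frac{m\, c_{\beta,a,1+2/\beta,m-1}}{c_{\beta,b,a,m}} = \frac{m\, c_{\beta,a,1+2/\beta,m-1}}{c_{\beta,a,b,m}}$ (using $c_{\beta,b,a,m} = c_{\beta,a,b,m}$), which is precisely $\tilde C_{\beta,a,b,m}$.

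There is essentially no obstacle here; the only point requiring a moment's care is bookkeeping — making sure that every occurrence of $a$, $b$, $\lambda$, and $1-\lambda$ in \eqref{expr} is transformed consistently under the combined operation of swapping $(a,b)$ and replacing $\lambda$ by $1-\tilde\lambda$, and checking that the constant simplifies using the $a\leftrightarrow b$ symmetry of the Selberg integral \eqref{cee}. Since both operations are involutions that preserve $[0,1]$ and the reference Lebesgue measure, no Jacobian factors intervene and the corollary follows immediately.
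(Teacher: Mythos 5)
Your proposal is correct and is exactly the argument the paper intends: the corollary is obtained from Theorem \ref{general_min} by the involution $\lambda \mapsto 1-\lambda$, which maps the $(a,b)$ ensemble to the $(b,a)$ ensemble, together with the $a \leftrightarrow b$ symmetry of the Selberg constant \eqref{cee}. Your version simply spells out the bookkeeping that the paper leaves implicit, and it checks out.
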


A quick examination of the distribution \eqref{expr} suggests right away that, if $a$ is fixed or grows to $\infty$ much slower than $m$ or $b+m$, the smallest eigenvalue should go to $0$. In order to obtain asymptotics for the hypergeometric expression, we would then like to employ a transformation that would change $(1-\lambda)$ to  $\lambda$; however, for multivariate hypergeometrics, such formulae are only available when the series terminates (i.e.,  $1 - \beta(a+1)/2$ is a non-positive integer). In the absence of such a transformation for the general case (which is known in the classical case as a \emph{Kummer transformation}), we can also deal with the case when the multivariate hypergeometric can be reduced to a classical (single variable) one. This can be done if the the only partitions with corresponding non-zero coefficient in the series have only one part, i.e., $\beta(a+1)/2 - 1 = -\beta/2$. 

In the following sections, we will study the formula of Theorem \ref{general_min} for the particular values of $a,b$ and $\beta$ that place us in one of these two cases, and find asymptotics in various regimes. 

\subsection{Asymptotics for special cases}

Here we return to the motivating problem; what values of $\beta$, $a$, and $b$ are of interest in practice, and is it possible to use the hypergeometrics expressions given in Theorem \ref{general_min} to find the asymptotics?

For the \textbf{MP}, as given by Proposition \ref{jac_distr}, the distributions correspond to:
\begin{itemize}
\item[Case 1.] The real case: $\beta = 1$, $a = 0$, $b = n-2r$, and $m= r$.  What turns out to be crucial in this case is that $\beta/2 (a+1) -1 = - \beta/2$; we will therefore study the general case $ a = 2/\beta - 2$, $m=r$.
\item[Case 2.] The complex case: $\beta = 2$, $a = 0$, $b = n-2r$, and $m=r$. For this case, the crucial factor is that $\beta/2 (a+1) - 1 = 0$; this is a particular instance of $\beta/2(a+1) \in \mathbb{N_+}$, the positive integers (or natural numbers). We will analyze this latter, more general case. 
\end{itemize}

\begin{remark} It is worth noting that, although not connected directly with the \textbf{MP}, another case of interest relates to the quaternionic Haar measure. As per \cite{forr06}, if instead of an orthogonal or unitary matrix in the \textbf{MP} we start from a symplectic (quaternionic) one, the resulting distribution is the same as in \ref{jac_distr}, with $\beta = 4$, $a = 0$, $b=n-2r$, and $m=r$. This can be seen as a particular instance of Case 2; so Case 2 will cover both complex and quaternionic matrices. \end{remark}

In the following sections we study the asymptotics of the smallest eigenvalue distributions in each of these two cases.

\subsection{Case 1: $a = \frac{2}{\beta} - 2$} \label{case1}

In this case we have that $\beta/2(a+1) - 1 = - \beta/2$. We start by examining the smallest eigenvalue distribution as given by Theorem \ref{general_min}:

\begin{eqnarray}
f_{min} (\lambda) & =& C_{\beta, b, m} ~~\lambda^{-\frac{\beta}{2}}~(1- \lambda)^{\frac{\beta}{2}m(b+m)-1} \times  \nonumber \\
&& \hspace{1cm}  {}_2F_{1}^{2/\beta} (\frac{\beta}{2}, \frac{\beta(b+m-1)}{2}; \frac{\beta(b+2m-1)}{2}+1; (1-\lambda) I_{m-1})~;\label{eq_case1}
\end{eqnarray}
here
\begin{eqnarray*}
C_{\beta, b, m} & = & \frac{m~c_{\beta, b, 1+2/\beta, m-1}}{c_{\beta, 2/\beta - 2, b, m}} ~.
\end{eqnarray*}

After various cancellations, we obtain that
\begin{eqnarray*}
C_{\beta, b, m} & = & \frac{\frac{\beta}{2} m (b+m) ~ \Gamma \left (\frac{\beta}{2}(b+m-1)+1 \right ) \Gamma \left ( \frac{\beta}{2} (m-1)+1 \right )}{\Gamma \left ( 1 - \frac{\beta}{2} \right) \Gamma \left ( \frac{\beta}{2} (b+2m-1)+1 \right)}~.
\end{eqnarray*}

\begin{lemma} \label{hyp_red_case1}
We have

{\footnotesize \[
\!\!\!\!\!{}_2F_{1}^{\beta} \left (\frac{\beta}{2}, \frac{\beta(b+m-1)}{2}; \frac{\beta(b+2m-1)}{2} + 1; (1-\lambda)^{m-1} \right ) = {}_2F_{1} \left ( \frac{\beta(b+m-1)}{2}, \frac{\beta(m-1)}{2}; \frac{\beta(b+2m-1)}{2}+1 ; 1-\lambda \right)~,
\]}
where the second hypergeometric is no longer the matrix argument one, but the classical one (corresponding to matrix dimension $1$). 
\end{lemma}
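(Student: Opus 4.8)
The plan is to use a vanishing property of the generalized Pochhammer symbol $(\cdot)^{\beta}_{\kappa}$ to collapse the multivariate series on the left to a classical one-variable series, after which the identity is pure bookkeeping. First I would expand the left-hand side by the definition of ${}_2F_1^{\beta}$, so that it reads
\[
\sum_{k\ge 0}\ \sum_{\kappa\vdash k}\ \frac{1}{k!}\ \frac{(\tfrac{\beta}{2})^{\beta}_{\kappa}\,(\tfrac{\beta(b+m-1)}{2})^{\beta}_{\kappa}}{(\tfrac{\beta(b+2m-1)}{2}+1)^{\beta}_{\kappa}}\ C^{\beta}_{\kappa}\bigl((1-\lambda)I_{m-1}\bigr).
\]
The key observation is that $(\tfrac{\beta}{2})^{\beta}_{\kappa}=\prod_{i\ge 1}\bigl(\tfrac{\beta}{2}-\tfrac{\beta}{2}(i-1)\bigr)_{k_i}=\prod_{i\ge 1}\bigl(\tfrac{\beta}{2}(2-i)\bigr)_{k_i}$, whose $i=2$ factor is $(0)_{k_2}$, and this is $0$ as soon as $k_2\ge 1$. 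Since the denominator symbol $(\tfrac{\beta(b+2m-1)}{2}+1)^{\beta}_{\kappa}$ is a product of positive rising factorials and hence never vanishes, only the one-row partitions $\kappa=(k)$, $k\ge 0$, survive. For those partitions every generalized Pochhammer symbol degenerates to the ordinary one, $(a)^{\beta}_{(k)}=(a)_k$.

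The second step is to evaluate $C^{\beta}_{(k)}\bigl((1-\lambda)I_{m-1}\bigr)$. By homogeneity of the Jack polynomials this equals $(1-\lambda)^{k}\,C^{\beta}_{(k)}(I_{m-1})$, and by the normalization identity \eqref{jack_id}, $C^{\beta}_{(k)}(I_{m-1})=(2/\beta)^{2k}k!\,(\tfrac{(m-1)\beta}{2})_{k}/j^{\beta}_{(k)}$. So I would compute $j^{\beta}_{(k)}$ directly from Definition \ref{pocch_jk}: the single-row diagram $(k)$ has all leg-lengths equal to $0$, while its boxes have arm-lengths $k-1,k-2,\dots,0$, which gives $j^{\beta}_{(k)}=\prod_{i=1}^{k}\tfrac{2}{\beta}\,i\cdot\bigl(1+\tfrac{2}{\beta}(i-1)\bigr)$; pulling a factor $\tfrac{2}{\beta}$ out of the second term rewrites this as $(2/\beta)^{2k}k!\,(\tfrac{\beta}{2})_{k}$. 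Hence $C^{\beta}_{(k)}(I_{m-1})=(\tfrac{(m-1)\beta}{2})_{k}\big/(\tfrac{\beta}{2})_{k}$.

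Substituting back, the left-hand side becomes
\[
\sum_{k\ge 0}\ \frac{(\tfrac{\beta}{2})_{k}\,(\tfrac{\beta(b+m-1)}{2})_{k}}{(\tfrac{\beta(b+2m-1)}{2}+1)_{k}\,k!}\,(1-\lambda)^{k}\ \cdot\ \frac{(\tfrac{(m-1)\beta}{2})_{k}}{(\tfrac{\beta}{2})_{k}},
\]
and the factor $(\tfrac{\beta}{2})_{k}$ cancels, leaving exactly the classical series ${}_2F_1\bigl(\tfrac{\beta(b+m-1)}{2},\tfrac{\beta(m-1)}{2};\tfrac{\beta(b+2m-1)}{2}+1;1-\lambda\bigr)$, which is the claim; one checks in passing that the parameters satisfy $c-a-b=1+\beta/2>0$, so the series also converges at $\lambda=0$.

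The essential idea is the first one, namely the vanishing of $(\tfrac{\beta}{2})^{\beta}_{\kappa}$ off the one-row partitions; once that is in hand the rest is mechanical. The only place that genuinely demands care is the evaluation of $j^{\beta}_{(k)}$, where one must apply the paper's arm/leg conventions in the right order (here the \emph{legs} all vanish and the \emph{arms} run over $0,\dots,k-1$); comparing against $k=1,2$ is a quick sanity check. I would also note, to be safe, that the further factors $(\tfrac{\beta}{2}(2-i))_{k_i}$ for $i\ge 3$ need no separate treatment, since they only ever appear multiplied by the already-vanishing $i=2$ factor.
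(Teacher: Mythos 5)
Your proposal is correct and follows essentially the same route as the paper: the vanishing of $(\tfrac{\beta}{2})^{\beta}_{\kappa}$ on all partitions with more than one row, followed by the explicit evaluation of $j^{\beta}_{[k]}=(2/\beta)^{2k}k!\,(\tfrac{\beta}{2})_{k}$ and the normalization identity \eqref{jack_id} to collapse the Jack-polynomial series to the classical ${}_2F_{1}$. Your added remarks (the $i\ge 3$ factors being dominated by the vanishing $i=2$ factor, and the convergence check $c-a-b=1+\beta/2>0$) are correct and only make the argument more complete.
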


\begin{proof}
The proof is based on two simple facts. One is that the series is absolutely convergent for $\lambda \in [0,1]$, and the second is that for any partition $\kappa$ of $k$,
\[
\left ( \frac{\beta}{2} \right )^{\beta}_{\kappa} = \delta_{\kappa, [k]} ~\left ( \frac{\beta}{2} \right)_k~,
\]
where the Pochhammer symbol in the right hand side is the classical falling factorial. As soon as $\kappa$ has more than one part, the contribution from that part will make the falling factorial $0$ by Definition \ref{pocch_jk}.

We can now see that
{\footnotesize
\begin{eqnarray*}
{}_2F_{1}^{\beta} \left (\frac{\beta}{2}, \frac{\beta(b+m-1)}{2}; \frac{\beta(b+2m-1)}{2} +1; (1-\lambda)^{m-1} \right ) & = & \sum_{k=0}^{\infty} \frac{ \left ( \frac{\beta}{2} \right )_{k} \left ( \frac{\beta(b+m-1)}{2} \right )_{k}}{k! ~\left ( \frac{\beta(b+2m-1)}{2} + 1 \right )_{k}} ~ C_{[k]}^{\beta} ((1-\lambda)I_{m-1})~,\\
&=&\sum_{k=0}^{\infty} \frac{ \left ( \frac{\beta}{2} \right )_{k} \left ( \frac{\beta(b+m-1)}{2} \right )_{k}}{k! ~\left ( \frac{\beta(b+2m-1)}{2} +1 \right )_{k}} ~ (1-\lambda)^{k}~ C_{[k]}^{\beta}(I_{m-1})~.
\end{eqnarray*}}
Using  \eqref{jack_id} for $\kappa = [k]$, along with the fact that 
in this case $j_{[k]}$ can be easily computed to be $$ j_{[k]}^{\beta} = (2/\beta)^k k! \prod_{i=1}^{k} \left ( 1+ \frac{2}{\beta}(i-1) \right) =  (2/\beta)^{2k} ~k!~ \left ( \frac{\beta}{2} \right )_{k}~, $$ 
after cancellation, Lemma \ref{hyp_red_case1} is proved. 
\end{proof}

To analyze the classical hypergeometric on the right hand side of \ref{hyp_red_case1}, we make use of the following transformation which can be found for example as formula 15.3.6 in \cite{Abr_Steg}:
\begin{eqnarray}
{}_2F_{1} ( a, b; c; z) & = & \frac{\Gamma(c) \Gamma(c-a-b)}{\Gamma(c-a) \Gamma(c-b)} {}_2F_{1}(a, b; a+b-c+1; 1-z) + \nonumber \\
&& ~~(1-z)^{c-a-b}~\frac{\Gamma(c) \Gamma(a+b-c)}{\Gamma(a) \Gamma(b)} {}_2F_{1}(c-a, c-b; c-a-b+1; 1-z)~. \label{form}
\end{eqnarray}
By plugging in $a = \frac{\beta(b+m-1)}{2}$, $b = \frac{\beta(m-1)}{2}$, and $c = \frac{\beta(b+2m-1)}{2} + 1$, as well as $z = 1-\lambda$, we obtain
{\footnotesize \begin{eqnarray}
\!\!\!\!\!\!\!\!\! {}_2F_{1} (\frac{\beta(b+m-1)}{2}, \frac{\beta(m-1)}{2}; \frac{\beta(b+2m-1)}{2}+1; 1-\lambda ) 
\!\!\!& = & \!\!\!
A_{\beta, b, m} ~ {}_2F_{1} (\frac{\beta(b+m-1)}{2}, \frac{\beta(m-1)}{2}; -\frac{\beta}{2}; \lambda) ~~+ \nonumber\\
& & \!\!\!\!\!\!\!\!\!\!\!\!\!\!\!\!\!\! \lambda^{1+ \frac{\beta}{2}} ~
B_{\beta, b, m} ~{}_2F_{1} (\frac{\beta m }{2}+1 , \frac{\beta(b+m)}{2} + 1; 2 + \frac{\beta}{2}; \lambda) \label{ugh}
\end{eqnarray}}
with
\begin{eqnarray*}
A_{\beta, b, m} & = & \frac{\Gamma\left(\frac{\beta(b+2m-1)}{2}+1\right)\Gamma \left(\frac{\beta}{2}+1\right)}{\Gamma \left(\frac{\beta m }{2}+1 \right) \Gamma \left(\frac{\beta(b+m)}{2} +1 \right)}~ ~~\mbox{and}\\
B_{\beta, b, m} & = & \frac{\Gamma\left(\frac{\beta(b+2m-1)}{2}+1\right)\Gamma \left(-1 - \frac{\beta}{2}\right)}{\Gamma \left(\frac{\beta(m-1)}{2} \right) \Gamma \left(\frac{\beta(b+m-1)}{2} \right)}~.
\end{eqnarray*}
The useful thing about the right-hand-side formula is that $m$ and $b+m$ act like essentially independent variables. If we combine \eqref{eq_case1}, the results of Lemma \ref{hyp_red_case1}, and \eqref{ugh}, followed by factoring out 
\[
F := \frac{\Gamma(\frac{\beta(b+2m-1)}{2} +1)\Gamma(1+ \frac{\beta}{2}) \Gamma( - \frac{\beta}{2}) }{\Gamma(\frac{\beta(b+m)}{2}+ 1)} 
\]
 from the right hand side of \eqref{ugh}, 
we can conclude the following. 

\begin{theorem} \label{gen_1}
The smallest eigenvalue of $\beta$-Jacobi ensembles with $ a= \frac{2}{\beta} - 2$ is given by
{\footnotesize \begin{eqnarray}
f_{\beta, b,m}(\lambda) ~~&= &~~\tilde{C}_{\beta, b,m} ~\lambda^{-\beta/2} ~ ( 1- \lambda)^{\beta m (b+m)/2-1} \nonumber \\
& & \hspace{-.5cm} \times ~ \left ( \frac{1}{\Gamma(-\frac{\beta}{2}) \Gamma( \frac{\beta m}{2} +1 )} {}_2F_{1} (\frac{\beta(b+m-1)}{2}, \frac{\beta(m-1)}{2}; -\frac{\beta}{2}; \lambda) \right . \nonumber\\
& & \hspace{-.25cm} \left . - ~\lambda^{1+\beta/2} \frac{1}{\Gamma ( \frac{\beta}{2} + 2) \Gamma(\frac{\beta(m-1)}{2})} ~\frac{\Gamma(\frac{\beta(b+m)}{2}+1)}{\Gamma(\frac{\beta(b+m-1)}{2})} ~{}_2F_{1} (\frac{\beta m}{2}+1, \frac{\beta(b+m)}{2}+1; 2+\frac{\beta}{2}; \lambda) \right) \label{distr_rn}
\end{eqnarray}}
where
\[
\tilde{C}_{\beta, b, m} =\frac{\Gamma \left ( -\frac{\beta}{2} \right) \Gamma \left( 1 + \frac{\beta}{2} \right)}{\Gamma \left ( 1- \frac{\beta}{2}\right)} ~  \frac{\beta}{2}m (b+m) ~\frac{\Gamma \left ( \frac{\beta}{2} (b+m-1)+1\right)}{\Gamma \left(\frac{\beta}{2}(b+m) + 1\right)} ~\Gamma \left ( \frac{\beta}{2}(m-1) +1 \right)~.
\]
\end{theorem}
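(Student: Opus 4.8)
The plan is to obtain Theorem \ref{gen_1} by chaining together, in order, the three ingredients already assembled in this subsection and then collecting the Gamma-function constants; no new idea is needed beyond careful bookkeeping. Starting from \eqref{eq_case1}, I would first apply Lemma \ref{hyp_red_case1} to replace the matrix-argument ${}_2F_1^{2/\beta}$ by the classical scalar ${}_2F_1\!\left(\tfrac{\beta(b+m-1)}{2},\tfrac{\beta(m-1)}{2};\tfrac{\beta(b+2m-1)}{2}+1;1-\lambda\right)$ --- the reduction being legitimate because the first upper parameter $\beta/2$ annihilates every partition with more than one row, and the series converges absolutely on $[0,1]$.

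Next I would invoke the connection formula \eqref{form} with the identification $a=\tfrac{\beta(b+m-1)}{2}$, $b=\tfrac{\beta(m-1)}{2}$, $c=\tfrac{\beta(b+2m-1)}{2}+1$, $z=1-\lambda$. A one-line check gives $c-a-b=\tfrac{\beta}{2}+1$, $a+b-c+1=-\tfrac{\beta}{2}$, $c-a=\tfrac{\beta m}{2}+1$ and $c-b=\tfrac{\beta(b+m)}{2}+1$, so the right-hand side of \eqref{form} is precisely \eqref{ugh}, with $(1-z)^{c-a-b}=\lambda^{1+\beta/2}$ and Gamma-ratio coefficients $A_{\beta,b,m},B_{\beta,b,m}$ as displayed. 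Substituting \eqref{ugh} into \eqref{eq_case1} and factoring the common quantity $F$ out of the bracket, the remaining computation is to verify that $A_{\beta,b,m}/F=\big(\Gamma(-\tfrac{\beta}{2})\,\Gamma(\tfrac{\beta m}{2}+1)\big)^{-1}$ and $B_{\beta,b,m}/F=-\big(\Gamma(\tfrac{\beta}{2}+2)\,\Gamma(\tfrac{\beta(m-1)}{2})\big)^{-1}\,\Gamma(\tfrac{\beta(b+m)}{2}+1)/\Gamma(\tfrac{\beta(b+m-1)}{2})$, which follows from $\Gamma(2+\tfrac{\beta}{2})=(1+\tfrac{\beta}{2})\Gamma(1+\tfrac{\beta}{2})$ and $\Gamma(-\tfrac{\beta}{2})=(-1-\tfrac{\beta}{2})\Gamma(-1-\tfrac{\beta}{2})$ (the latter supplying the minus sign in the second term of \eqref{distr_rn}), and then that $\tilde C_{\beta,b,m}=C_{\beta,b,m}\,F$ collapses, after cancelling the $\Gamma\!\big(\tfrac{\beta}{2}(b+2m-1)+1\big)$ factors, to the stated constant; here $C_{\beta,b,m}$ is the already-simplified Selberg-integral ratio recorded in this subsection.

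I do not expect a genuine obstacle, only two points that need attention. The first is the admissibility of \eqref{form}: in its non-logarithmic form it requires $c-a-b=\tfrac{\beta}{2}+1\notin\mathbb{Z}$ and the factors $\Gamma(-\tfrac{\beta}{2})$, $\Gamma(-1-\tfrac{\beta}{2})$, $\Gamma(1-\tfrac{\beta}{2})$ to be finite and nonzero; in the parameter range where the ensemble is actually defined ($a>-1$ forces $\beta<2$) all of this holds, and for exceptional values of $\beta$ the identity should be read as the appropriate limit. The second is purely clerical: keeping track of which of the many Gamma arguments shift by one under the functional equation, correctly threading the sign through $B_{\beta,b,m}$, and not losing the prefactor $\tfrac{\beta}{2}m(b+m)$ coming from $C_{\beta,b,m}$. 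Once these are handled, \eqref{eq_case1} with the substitutions in place is exactly \eqref{distr_rn}.
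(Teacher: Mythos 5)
Your proposal is correct and follows essentially the same route as the paper: reduce the multivariate ${}_2F_1^{2/\beta}$ to a classical ${}_2F_1$ via Lemma \ref{hyp_red_case1}, apply the connection formula \eqref{form} with exactly the parameter identifications you list, factor out $F$, and collect the constants into $\tilde{C}_{\beta,b,m}$; your Gamma-function bookkeeping (including the sign from $\Gamma(-\tfrac{\beta}{2})=(-1-\tfrac{\beta}{2})\Gamma(-1-\tfrac{\beta}{2})$ and the admissibility remark that $a>-1$ forces $\beta<2$ so $c-a-b=\tfrac{\beta}{2}+1\notin\mathbb{Z}$) checks out.
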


We can now analyze the asymptotics in various regimes. 

\begin{itemize}
\item[\textbf{Regime 1:}]  $m$ fixed, $b \rightarrow \infty$. This is the case when the $\beta$-Jacobi ensembles ``turn'' into $\beta$-Laguerre, and so we are also computing here the exact distributions for the smallest eigenvalue distributions for $\beta$-Laguerre ensembles.

For this case, we use the following two facts:
\begin{eqnarray}
\frac{\Gamma(x + \alpha)}{x^{\alpha}~\Gamma(x)} &\rightarrow&  1~~\mbox{as}~~ x \rightarrow \infty ~~\mbox{and $\alpha$ is fixed} \label{f1} \\
{}_2F_{1}(a, b; c; \frac{x}{a}) & \rightarrow &{}_1F_{1}(b;c;z)~~\mbox{as}~~a\rightarrow \infty \label{f2}~.
\end{eqnarray}

If we make the transformation $y = \beta (b+m) \lambda/2$ and allow $b \rightarrow \infty$, the hypergeometric expression between the parentheses in \eqref{distr_rn} becomes
{\footnotesize \begin{eqnarray*} 
 \frac{{}_1F_{1} (\frac{\beta(m-1)}{2}; -\frac{\beta}{2}; 
y) }{\Gamma \left ( - \frac{\beta}{2} \right) \Gamma \left (\frac{\beta}{2} m + 1\right) } ~~- ~\left (
y \right )^{1+ \beta/2}~ \frac{{}_1F_{1} (\frac{\beta m}{2}+1; 2+ \frac{\beta}{2}; 
y )}{\Gamma \left (\frac{\beta}{2} + 2 \right) \Gamma \left (\frac{\beta(m-1)}{2} \right)} ~ = \frac{- \sin (\frac{\beta}{2} \pi)}{\pi} ~U \left ( \frac{\beta(m-1)}{2}; - \frac{\beta}{2}; 
y \right) ~,
\end{eqnarray*}}
where we used formula 13.1.3 from \cite{Abr_Steg}. 

We obtain thus the following result:

\begin{theorem} \label{a1}
As $b \rightarrow \infty$ while $m$ is kept fixed, the limiting distribution for the scaled smallest eigenvalue ($\beta(b+m) \lambda/2$) of the $\beta$-Jacobi ensemble with $a = 2/\beta - 2$ is given by  
\begin{eqnarray*}
f_m(y) & = & \frac{ 2}{\beta \pi} \Gamma \left ( 1 + \frac{\beta}{2} \right) \sin (\frac{\beta}{2} \pi ) ~ m \Gamma \left ( \frac{\beta(m-1)}{2}+1 \right)  ~y^{-\beta/2} e^{-my} ~U\left(\frac{\beta}{2}(m-1); - \frac{\beta}{2}; y \right)~.
\end{eqnarray*}
\end{theorem}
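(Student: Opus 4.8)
The plan is to start from the exact density \eqref{distr_rn} of Theorem~\ref{gen_1}, substitute $\lambda = 2y/(\beta(b+m))$ so that $y = \beta(b+m)\lambda/2$ is the scaled eigenvalue, and pass to the limit $b\to\infty$ with $m$ held fixed, one factor at a time; the density of $y$ is $f_{\beta,b,m}(\lambda)$ times the Jacobian $2/(\beta(b+m))$. I would group the resulting expression into four pieces: the constant $\tilde C_{\beta,b,m}$, the power $\lambda^{-\beta/2}=(\beta(b+m)/2)^{\beta/2}\,y^{-\beta/2}$, the factor $(1-\lambda)^{\beta m(b+m)/2-1}$, and the bracketed difference of two Gauss hypergeometrics.

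For the third piece, $(1-\lambda)^{\beta m(b+m)/2-1}=\bigl[(1-2y/(\beta(b+m)))^{\beta(b+m)/2}\bigr]^{m}(1-\lambda)^{-1}\to e^{-my}$. For $\tilde C_{\beta,b,m}$, \eqref{f1} gives $\Gamma(\tfrac\beta2(b+m-1)+1)/\Gamma(\tfrac\beta2(b+m)+1)\sim(\tfrac\beta2(b+m))^{-\beta/2}$; inside the bracket \eqref{f1} similarly gives $\Gamma(\tfrac{\beta(b+m)}2+1)/\Gamma(\tfrac{\beta(b+m-1)}2)\sim(\tfrac\beta2(b+m))^{1+\beta/2}$, which, multiplied against $\lambda^{1+\beta/2}=(2/(\beta(b+m)))^{1+\beta/2}y^{1+\beta/2}$, tends to $y^{1+\beta/2}$. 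For the two hypergeometrics I would invoke \eqref{f2} (using the symmetry of ${}_2F_1$ in its first two arguments for the second one): since in each case the large parameter times $\lambda$ tends to $y$, one gets ${}_2F_1(\tfrac{\beta(b+m-1)}2,\tfrac{\beta(m-1)}2;-\tfrac\beta2;\lambda)\to{}_1F_1(\tfrac{\beta(m-1)}2;-\tfrac\beta2;y)$ and ${}_2F_1(\tfrac{\beta m}2+1,\tfrac{\beta(b+m)}2+1;2+\tfrac\beta2;\lambda)\to{}_1F_1(\tfrac{\beta m}2+1;2+\tfrac\beta2;y)$. Now the powers of $(b+m)$ coming from $\tilde C_{\beta,b,m}$, from $\lambda^{-\beta/2}$, from the bracket, and from the Jacobian cancel exactly, leaving the $y$-independent constant $-\tfrac2\beta\,\Gamma(1+\tfrac\beta2)\,m\,\Gamma(\tfrac\beta2(m-1)+1)$ once one uses $\Gamma(-\tfrac\beta2)/\Gamma(1-\tfrac\beta2)=-2/\beta$. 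The surviving bracket $\Gamma(-\tfrac\beta2)^{-1}\Gamma(\tfrac{\beta m}2+1)^{-1}{}_1F_1(\tfrac{\beta(m-1)}2;-\tfrac\beta2;y)-y^{1+\beta/2}\,\Gamma(\tfrac\beta2+2)^{-1}\Gamma(\tfrac{\beta(m-1)}2)^{-1}{}_1F_1(\tfrac{\beta m}2+1;2+\tfrac\beta2;y)$ is, after applying the reflection formula to the two Gamma coefficients, exactly $-\tfrac{\sin(\beta\pi/2)}{\pi}\,U(\tfrac\beta2(m-1);-\tfrac\beta2;y)$ by formula 13.1.3 of \cite{Abr_Steg}. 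Multiplying the pieces and cancelling the two sign factors yields precisely $f_m(y)$.

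The main obstacle is the rigorous justification of the termwise limit of the two hypergeometric series: one must bound the series coefficients uniformly in $b$ in order to interchange $\lim_{b\to\infty}$ with the infinite sum, which is the content packaged in \eqref{f2} but should be cited with care (and, if desired, re-derived by dominated convergence on the coefficients $(\tfrac{\beta(b+m-1)}2)_k\lambda^k/k!$, etc.). A minor technical point is that the two-term decomposition 13.1.3 of $U$ — and with it \eqref{distr_rn} itself — degenerates when $-\beta/2$ is a non-positive integer; for those exceptional values of $\beta$ one argues by continuity in $\beta$, the motivating case $\beta=1$ being unaffected. Everything else is bookkeeping of Gamma factors.
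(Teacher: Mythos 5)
Your proposal is correct and follows essentially the same route as the paper: substitute $y=\beta(b+m)\lambda/2$, use \eqref{f1} for the Gamma ratios and \eqref{f2} for the two Gauss hypergeometrics, and identify the resulting combination of ${}_1F_1$'s with $-\tfrac{\sin(\beta\pi/2)}{\pi}\,U(\tfrac{\beta}{2}(m-1);-\tfrac{\beta}{2};y)$ via formula 13.1.3 of \cite{Abr_Steg}. Your bookkeeping of the powers of $(b+m)$ and of the sign cancellation matches the paper's (terser) computation, and your remarks on justifying the termwise limit and on the degenerate integer values of $\beta/2$ address points the paper leaves implicit.
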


Note that when $\beta = 1$, this agrees with \cite{jiang09a} and \cite{edelman89a} (also taking into account that our scaling differs by a factor of $2$ from the ones used there.)

\item[\textbf{Regime 2:}] $m$, $(b+m)$ $\rightarrow \infty$. Note that this also covers the case $b$ fixed. We now use facts \eqref{f1} and \eqref{f2}, as well as
\begin{equation} \label{f4}
{}_2F_{1}(a, b; c; \frac{x}{ab}) \rightarrow  {}_0F_{1}(c; x)
\end{equation}
to obtain the limiting distribution of $y = \beta m (b+m) \lambda/2$.

We shall use the fact that the hypergeometric expression in \eqref{distr_rn}, when multiplied by $\Gamma( \beta m/2+1)$, in this regime, converges to 
\begin{equation*}
G_{\beta} (y) := \frac{1}{\Gamma\left(- \frac{\beta}{2} \right)} ~{} _0F_{1} \left (-\frac{\beta}{2}; \frac{\beta}{2} y \right ) - \left(\frac{\beta}{2} y \right)^{-\beta/2} \frac{1}{\Gamma \left ( 2 +\frac{\beta}{2}\right)}  ~{}_0F_{1}(2+\frac{\beta}{2}; \frac{\beta}{2} y) ~;
\end{equation*}
using the definition of the classical $_0F_1$ hypergeometric function as well as formulae 9.6.2 and 9.6.10 from \cite{Abr_Steg}, we obtain that 
\[
G_{\beta}(y) = - \frac{2\sin \left ( \frac{\beta}{2} \pi \right)}{\pi} ~ \left( \frac{\beta}{2}y \right)^{\frac{1}{2} + \frac{\beta}{4}} ~K_{1+ \frac{\beta}{2}}( \sqrt{2 \beta y})~,
\]
where $K_{\nu}$ is the well-known modified Bessel function.

Using this we can now conclude the following.
\begin{theorem} \label{a2}
As $m, (b+m) \rightarrow \infty$, the limiting distribution for the scaled smallest eigenvalue ($\beta m (b+m) \lambda/2$) of the $\beta$-Jacobi ensemble with $a = 2/\beta - 2$ is given by  
\begin{eqnarray*}
f(y) & = & \frac{4 \sin \left ( \frac{\beta}{2} \pi\right)}{\beta \pi} ~\left ( \frac{\beta}{2} \right)^{\frac{1}{2} - \frac{\beta}{4}}~ \Gamma \left ( 1+ \frac{\beta}{2} \right) ~ y^{1/2 - \beta/4} e^{-y}~ K_{1+ \frac{\beta}{2}}(\sqrt{2\beta y})~.
\end{eqnarray*}
\end{theorem}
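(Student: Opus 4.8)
\emph{Proof plan.} The plan is to start from the \emph{exact} density of Theorem~\ref{gen_1}, perform the change of variables $\lambda=\frac{2y}{\beta m(b+m)}$ that realizes the scaling $y=\frac{\beta}{2}m(b+m)\lambda$, and pass to the limit through the three factors of \eqref{distr_rn} one at a time, collecting every $m$- and $(b+m)$-dependent quantity --- including the Jacobian $\frac{2}{\beta m(b+m)}$ of the substitution --- into a single prefactor whose limit is taken at the end. The algebraic part is immediate: since $\lambda=\Theta(1/(m(b+m)))$ in this regime, $(1-\lambda)^{\beta m(b+m)/2-1}\to e^{-y}$ by the elementary exponential limit, while $\lambda^{-\beta/2}=(\frac{2}{\beta m(b+m)})^{-\beta/2}\,y^{-\beta/2}$ contributes a $y^{-\beta/2}$ and a divergent constant that is absorbed into the prefactor.

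First I would analyze the bracket in \eqref{distr_rn}, using the facts \eqref{f1}, \eqref{f2} and \eqref{f4}. Multiplying and dividing it by $\Gamma(\frac{\beta m}{2}+1)$, the first summand becomes $\frac{1}{\Gamma(-\beta/2)}\,{}_2F_1(\frac{\beta(b+m-1)}{2},\frac{\beta(m-1)}{2};-\frac{\beta}{2};\lambda)$; both upper parameters diverge and their product times $\lambda$ tends to $\frac{\beta}{2}y$, so by the confluence \eqref{f4} it tends to $\frac{1}{\Gamma(-\beta/2)}\,{}_0F_1(-\frac{\beta}{2};\frac{\beta}{2}y)$. In the second summand I would first use \eqref{f1} to replace the quotient of $\Gamma(\frac{\beta m}{2}+1)\Gamma(\frac{\beta(b+m)}{2}+1)$ by $\Gamma(\frac{\beta(m-1)}{2})\Gamma(\frac{\beta(b+m-1)}{2})$ with its equivalent $(\frac{\beta m}{2})^{1+\beta/2}(\frac{\beta(b+m)}{2})^{1+\beta/2}$; multiplied by $\lambda^{1+\beta/2}$ this collapses to $(\frac{\beta}{2}y)^{1+\beta/2}$, and the remaining ${}_2F_1(\frac{\beta m}{2}+1,\frac{\beta(b+m)}{2}+1;2+\frac{\beta}{2};\lambda)\to{}_0F_1(2+\frac{\beta}{2};\frac{\beta}{2}y)$ again by \eqref{f4}. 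Thus $\Gamma(\frac{\beta m}{2}+1)\cdot(\text{bracket})$ converges to
\[
G_\beta(y)=\frac{1}{\Gamma(-\beta/2)}\,{}_0F_1\left(-\frac{\beta}{2};\frac{\beta}{2}y\right)-\left(\frac{\beta}{2}y\right)^{1+\beta/2}\frac{1}{\Gamma(2+\beta/2)}\,{}_0F_1\left(2+\frac{\beta}{2};\frac{\beta}{2}y\right).
\]
Writing each ${}_0F_1$ as a modified Bessel $I$ via $I_\nu(z)=\frac{(z/2)^\nu}{\Gamma(\nu+1)}\,{}_0F_1(\nu+1;z^2/4)$ (formula 9.6.10 of \cite{Abr_Steg}) with $z=\sqrt{2\beta y}$, the two terms acquire the common factor $(\frac{\beta}{2}y)^{1/2+\beta/4}$ and, via $K_\mu=\frac{\pi}{2\sin(\mu\pi)}(I_{-\mu}-I_\mu)$ (formula 9.6.2, with $\mu=1+\frac{\beta}{2}$ and $\sin((1+\frac{\beta}{2})\pi)=-\sin(\beta\pi/2)$), combine into $G_\beta(y)=-\frac{2\sin(\beta\pi/2)}{\pi}(\frac{\beta}{2}y)^{1/2+\beta/4}K_{1+\beta/2}(\sqrt{2\beta y})$.

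Next I would assemble the constant. Gathering $\tilde{C}_{\beta,b,m}$ from Theorem~\ref{gen_1}, the Jacobian, the factor $(\frac{2}{\beta m(b+m)})^{-\beta/2}$ split off from $\lambda^{-\beta/2}$, and the $\frac{1}{\Gamma(\beta m/2+1)}$ produced above, I would apply \eqref{f1} to the two Gamma ratios inside $\tilde{C}_{\beta,b,m}$: $\Gamma(\frac{\beta}{2}(b+m-1)+1)/\Gamma(\frac{\beta}{2}(b+m)+1)\sim(\frac{\beta}{2}(b+m))^{-\beta/2}$ and $\Gamma(\frac{\beta}{2}(m-1)+1)/\Gamma(\frac{\beta m}{2}+1)\sim(\frac{\beta m}{2})^{-\beta/2}$. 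The explicit $\frac{\beta}{2}m(b+m)$ in $\tilde{C}_{\beta,b,m}$ cancels the Jacobian; the three remaining $(\cdot)^{-\beta/2}$ factors multiply to $(\frac{\beta}{2})^{-\beta/2}$; and the surviving Gamma constant $\frac{\Gamma(-\beta/2)\Gamma(1+\beta/2)}{\Gamma(1-\beta/2)}$ reduces to $-\frac{2}{\beta}\Gamma(1+\frac{\beta}{2})$ by $\Gamma(1-\frac{\beta}{2})=-\frac{\beta}{2}\Gamma(-\frac{\beta}{2})$. Hence the prefactor tends to $-\frac{2}{\beta}\Gamma(1+\frac{\beta}{2})(\frac{\beta}{2})^{-\beta/2}$; multiplying this by $e^{-y}$, $y^{-\beta/2}$, and the Bessel form of $G_\beta(y)$, and using $(\frac{\beta}{2})^{-\beta/2}(\frac{\beta}{2})^{1/2+\beta/4}=(\frac{\beta}{2})^{1/2-\beta/4}$ together with $y^{-\beta/2}y^{1/2+\beta/4}=y^{1/2-\beta/4}$, reproduces exactly the density in the statement.

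The step I expect to be the main obstacle is making the two confluences inside the bracket rigorous rather than merely termwise: the bracket tends to $0$ while $\Gamma(\frac{\beta m}{2}+1)\to\infty$, so one cannot blindly interchange the limit with the series defining the ${}_2F_1$'s. The remedy is to exhibit, uniformly over the admissible range of $m$ and $b+m$ and for $y$ in a compact set, a summable (in $k$) majorant for the $k$-th coefficient of $\Gamma(\frac{\beta m}{2}+1)\cdot(\text{bracket})$ --- which is precisely where the quantitative content of \eqref{f1} and the decay of $\lambda$ must be used --- and then invoke dominated convergence. With that in hand, the remaining steps are routine manipulations of Gamma functions.
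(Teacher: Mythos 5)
Your proposal is correct and follows essentially the same route as the paper: substitute $\lambda=2y/(\beta m(b+m))$, apply the confluences \eqref{f1} and \eqref{f4} to the two terms of the bracket in \eqref{distr_rn} after multiplying by $\Gamma(\frac{\beta m}{2}+1)$, convert the resulting ${}_0F_1$'s to Bessel functions via 9.6.10 and 9.6.2 of \cite{Abr_Steg}, and collapse the Gamma-ratio prefactor. Two minor remarks: your second term of $G_\beta(y)$ carries the exponent $\bigl(\frac{\beta}{2}y\bigr)^{1+\beta/2}$ where the paper's intermediate display shows $\bigl(\frac{\beta}{2}y\bigr)^{-\beta/2}$ --- yours is the correct bookkeeping (only it yields the common factor $\bigl(\frac{\beta}{2}y\bigr)^{1/2+\beta/4}$ and hence the stated density), so you have silently fixed a typo; and your closing point about needing a dominated-convergence argument to justify the termwise confluence is a genuine refinement that the paper does not supply.
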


Note that when $\beta = 1$, this once again agrees with the combined results of \cite{jiang09a} and \cite{edelman89a}. Our scaling differs from the one used there by a factor of $2$; in effect the distribution for this case is simply 
\[
f_m(y) = \frac{1+ \sqrt{2y}}{\sqrt{2y}} e^{-y - \sqrt{2y}}~.
\]

The importance of this result is that the limit is achieved regardless of how $(b+m)$ and $m$ grow to infinity. In the case of the \textbf{MP}, we have $m= r, ~b = n- 2r$, and the condition is then that both $r$ and $n-r$ grow to infinity. This is stronger than the results following \cite{jiang09a}, where the condition is that $r = o(n/\log n)$.

\end{itemize}

\subsection{Case 2: $(a+1) \beta / 2  \in \mathbb{N}_+$} \label{case2}

Note that this case covers the $\beta \in 2\mathbb{N}_{+}$ case analyzed in \cite{forr_book} (although it is true that Forrester computes not only the hard edge/soft edge limits for $\beta$ even, but also all limiting correlations). 

For this particular case we will use a different hypergeometric formula to express $f_m(\lambda)$, which will yield a terminating series, which can then be converted into an analyzable expression.

Let $(a+1) \beta/2 = k$, with $k$ a positive integer.

Instead of (13.12) from \cite{forr_book}, this time we use equation (13.7) from the same, which in this particular case specializes to the proposition below.

\begin{proposition} We have
{\footnotesize \begin{eqnarray*}
\frac{1}{c_{\beta, b, 1 + 2/\beta, m-1}} \int_{[0,1]^{(m-1)}} \prod_{i=1}^{m-1} x_i^{\beta(b+1)/2-1} ~ (1-x_i)^{\beta} ~(1- x_i (1-\lambda))^{k - 1} ~ \prod_{i<j} |x_i - x_j|^{\beta}~dx_1 \ldots dx_{m-1} & = &  \\
&&  \hspace{-8cm}{}_2F_{1}^{4/\beta} (1-m, -m-b+1 ; -2m - b + 1 - \frac{2}{\beta} ; \{1-\lambda\}^{k-1})~,
\end{eqnarray*}}
where $c_{\beta, 2k/\beta - 1,b ,m-1}$ is given by \eqref{cee}.
\end{proposition}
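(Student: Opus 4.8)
The plan is to derive this identity as a specialization of equation (13.7) of \cite{forr_book}, in exact parallel with the way Proposition \ref{forr_2f1} was obtained from equation (13.12). A different master formula is needed here because the linear factor is raised to the non-negative integer power $k-1$ (legitimate precisely because $(a+1)\beta/2=k\in\mathbb N_+$ forces $k-1\in\{0,1,2,\dots\}$), and (13.7) is the evaluation tailored to that case: it returns a \emph{terminating} Jack-polynomial ${}_2F_1$ in $k-1$ variables, each equal to $1-\lambda$. The underlying mechanism is the dual Cauchy identity for Jack polynomials applied to $\prod_{i=1}^{m-1}(1-(1-\lambda)x_i)^{k-1}$, viewed as $\prod_{i=1}^{m-1}\prod_{j=1}^{k-1}(1-x_i y_j)$ evaluated at $y_1=\dots=y_{k-1}=1-\lambda$, followed by term-by-term use of the Selberg--Jack integral evaluations (cf.\ \cite{kaneko}); this is what converts the expansion in Jack polynomials of the $x$'s into one in Jack polynomials of the $y$'s with the companion parameter.

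Concretely, I would substitute into (13.7) the data of our integral: $n=m-1$ integration variables, Vandermonde exponent $\beta$, first Jacobi exponent $\tfrac{\beta}{2}(b+1)-1$, second Jacobi exponent $\beta$ (equivalently the Jacobi parameter $1+2/\beta$, which is exactly where the subscript pattern in $c_{\beta,b,1+2/\beta,m-1}$ comes from), linear-factor exponent $q=k-1$, and evaluation point $1-\lambda$. After clearing the common factors of $\beta$, the three hypergeometric parameters should collapse to $1-m$, $1-m-b$, and $1-2m-b-\tfrac2\beta$, the Jack parameter should come out to $4/\beta$, and the argument should be the $(k-1)$-vector $\{1-\lambda\}^{k-1}$. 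As a consistency check I would note that this series is supported on partitions fitting inside a $(k-1)\times(m-1)$ box, while the series of Proposition \ref{forr_2f1} — which also terminates here, since $1-\tfrac{\beta}{2}(a+1)=1-k\le 0$ — is supported on partitions fitting inside an $(m-1)\times(k-1)$ box; the two index sets correspond under conjugation of partitions, which is the combinatorial shadow of the duality built into (13.7).

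It then remains to identify the normalizing constant. The prefactor delivered by (13.7) is a ratio of Selberg constants; using the closed form \eqref{cee} — and, should (13.7) present it in the shape $c_{\beta,\,2k/\beta-1,\,b,\,m-1}$ (note $2k/\beta-1=a$), the Selberg reflection symmetry relating $c_{\beta,a,b,m-1}$ to $c_{\beta,b,1+2/\beta,m-1}$ — one checks that it equals $c_{\beta,b,1+2/\beta,m-1}$, so that the statement matches the normalization already used in \eqref{primo}. Integrability of the left-hand side is automatic from $b>-1$ and $\beta>0$, and no restriction on $\lambda\in[0,1]$ is needed because the series terminates.

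The step I expect to be the main obstacle is purely one of bookkeeping: pinning down the conventions so that everything lines up — Forrester's Jack parameter versus the superscript used in this paper, the passage to the companion parameter introduced by the dual Cauchy identity, and the several shifts by $1$, $\tfrac\beta2$, and $\tfrac2\beta$ that enter when the linear factor carries an integer exponent (the (13.7) case) rather than a generic one (the (13.12) case used for Proposition \ref{forr_2f1}). A secondary, minor point is reconciling the two equivalent closed forms of the Selberg normalization; both are instances of \eqref{cee}, but matching them cleanly uses one application of the reflection symmetry of the Selberg integral. Beyond the two ingredients already in play — the Jack ${}_2F_1$ integral evaluations of \cite{forr_book} and the explicit Selberg constant — no new idea should be required.
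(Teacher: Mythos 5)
Your proposal takes exactly the paper's route: the paper proves this proposition simply by specializing equation (13.7) of Forrester's book (the terminating, dual-parameter companion of the (13.12) evaluation used for Proposition \ref{forr_2f1}), which is precisely what you do, with the added (correct) explanation of the dual Cauchy mechanism and the normalization bookkeeping. No gap; your consistency check via conjugate partitions and your handling of the two equivalent forms of the Selberg constant go beyond what the paper records, but the underlying argument is the same.
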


Note that the hypergeometric series is terminating; in fact, it is a polynomial of degree at most $(m-1)(k-1)$.

Also note that since $b >-1$, $m \geq 1$, and $\beta>0$, $2m+b-1+\frac{2}{\beta} \geq m-1$, the series is well-defined  (if,  for some $\kappa$,  $(-2m - b + 1 - \frac{2}{\beta})_{\kappa} = 0$, then necessarily $(1-m)_{\kappa} = 0$ as well, and so the potentially ``offending'' term in the series does not in fact exist). 

Under these conditions, using Proposition 13.1.7 from \cite{forr_book}, we can change the variables in the hypergeometric expression from $(1-\lambda)^{k-1}$ to $\lambda^{k-1}$:
{\footnotesize \begin{eqnarray*}
_2F_1^{4/\beta} (1-m, -m-b+1; -2m-b+1- \frac{2}{\beta}; \{1-\lambda\}^{k-1}) & = & \frac{_2F_1^{4/\beta} (1-m, -m-b+1; 2 + \frac{2}{\beta}(k-1); \{\lambda\}^{k-1})}{_2F_1^{4/\beta} (1-m, -m-b+1; 2 + \frac{2}{\beta}(k-1); \{1\}^{k-1})}~.
\end{eqnarray*}}

According to equation (13.14) of the same \cite{forr_book}, 
\[
_2F_1^{4/\beta} (1-m, -m-b+1; 2 + \frac{2}{\beta}(k-1); \{1\}^{k-1}) = \prod_{j=1}^{k-1} \frac{\Gamma \left ( 2 + \frac{2}{\beta} j \right) \Gamma \left ( 2m + b + \frac{2}{\beta} j \right) }{\Gamma \left ( m + 1 + \frac{2}{\beta} j \right) \Gamma \left (m+b+1+ \frac{2}{\beta} j \right)}~:=A_{m, b, \beta, k}.
\]

We now group the left and, respectively, right terms in the fraction above, and rewrite the product as follows:
\begin{eqnarray*}
A_{m, b, \beta, k} & = & \prod_{j=1}^{k-1} \prod_{i=1}^{m-1} \frac{1}{i+1+\frac{2}{\beta} j} ~(m+b+i + \frac{2}{\beta}{j}) \\
& = & \prod_{j=1}^{k-1} \prod_{i=1}^{m-1} \frac{1}{(i+1)\frac{\beta}{2} + j} ~((m+b+i) \frac{\beta}{2} + j)~\\
& = & \prod_{i=1}^{m-1} \frac{ \Gamma \left ( 1 + (i+1) \frac{\beta}{2} \right)}{ \Gamma \left (k+(i+1) \frac{\beta}{2}\right)} ~ \frac{\Gamma \left (k + \frac{\beta}{2} (m+b+i) \right) }{\Gamma \left (1+ \frac{\beta}{2} (m+b+i) \right)}~.
\end{eqnarray*}

Putting everything together, we obtain
\begin{theorem} \label{gen_2}
For the case when $\beta/2(a+1)=k \in \mathbb{N}_+$, the smallest eigenvalue of the $\beta$-Jacobi ensemble is given by 
\begin{eqnarray}
f_m(\lambda) & = & W_{m,b, \beta, k} ~\lambda^{k-1} (1- \lambda)^{\frac{\beta}{2} m (b+m) - 1} ~ \nonumber \\
& & ~\times ~_2F_1^{4/\beta} (1-m, -m-b+1; 2 + \frac{2}{\beta}(k-1) ; \{\lambda\}^{k-1}) ~,
\label{due} \end{eqnarray}
with
\begin{eqnarray*} 
W_{m,b, \beta, k} & = & \frac{m ~c_{\beta, b, 1+ 2/\beta,  m-1}}{c_{\beta, 2k/\beta-1, b,m} A_{m,b,\beta,k}}~.
\end{eqnarray*}
After appropriate cancellations, we obtain that 
\begin{eqnarray} \label{const}
W_{m,b, \beta, k} & = & \frac{\Gamma \left ( 1+ \frac{\beta}{2} \right) }{\Gamma(k) \Gamma \left ( k + \frac{\beta}{2} \right) } ~\cdot~ \frac{m \Gamma \left ( k + \frac{\beta}{2}m \right)}{\Gamma \left (1 + \frac{\beta}{2} m \right)}~\cdot~ \frac{\Gamma \left ( k + \frac{\beta}{2}(b+m)\right)}{\Gamma \left ( \frac{\beta}{2}(b+m) \right)}~.
\end{eqnarray} 
\end{theorem}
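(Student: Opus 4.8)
The plan is to redo the integral computation of Section \ref{mgc} under the specialization $\tfrac{\beta}{2}(a+1)=k$, at which point the factor $(1-x_i(1-\lambda))^{\frac{\beta}{2}(a+1)-1}$ appearing in \eqref{primo} becomes the \emph{polynomial} factor $(1-x_i(1-\lambda))^{k-1}$ and Forrester's terminating identity (13.7) applies in place of (13.12). First I would take \eqref{primo} with $a=2k/\beta-1$, so that $\lambda^{\frac{\beta}{2}(a+1)-1}=\lambda^{k-1}$ and the outer Selberg constant is $1/c_{\beta,2k/\beta-1,b,m}$; the Proposition immediately preceding the theorem then replaces the $(m-1)$-dimensional integral by $c_{\beta,b,1+2/\beta,m-1}\cdot {}_2F_1^{4/\beta}(1-m,-m-b+1;-2m-b+1-\tfrac{2}{\beta};\{1-\lambda\}^{k-1})$. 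Along the way I would record the well-definedness remark already in the text: whenever $(-2m-b+1-\tfrac{2}{\beta})_\kappa=0$ one also has $(1-m)_\kappa=0$, so no term of the (polynomial, degree $\le(m-1)(k-1)$) series is ill-defined.

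Next I would flip the hypergeometric argument from $1-\lambda$ to $\lambda$ via Forrester's Proposition 13.1.7, which divides by the value of the same ${}_2F_1^{4/\beta}$ — now with third parameter $2+\tfrac{2}{\beta}(k-1)$ — at $\{1\}^{k-1}$; Forrester's (13.14) evaluates that denominator as the Gamma-ratio product $A_{m,b,\beta,k}$, which I would then regroup, exactly as displayed above the theorem, into the double product over $1\le i\le m-1$, $1\le j\le k-1$ and into its telescoped Gamma form. Assembling the pieces yields \eqref{due} with the unsimplified constant $W_{m,b,\beta,k}=m\,c_{\beta,b,1+2/\beta,m-1}/\bigl(c_{\beta,2k/\beta-1,b,m}\,A_{m,b,\beta,k}\bigr)$.

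The only substantive step is then the cancellation producing \eqref{const}. Here I would expand $c_{\beta,b,1+2/\beta,m-1}$ and $c_{\beta,2k/\beta-1,b,m}$ from \eqref{cee}, using $\tfrac{\beta}{2}(2k/\beta-1+j)=k+\tfrac{\beta}{2}(j-1)$ and $\tfrac{\beta}{2}(1+2/\beta+j)=1+\tfrac{\beta}{2}(j+1)$, and cancel against $A_{m,b,\beta,k}$ factor by factor: the $\Gamma(\tfrac{\beta}{2}(b+j))$ terms, which occur for $j=1,\dots,m-1$ in the numerator (first Selberg constant) and for $j=1,\dots,m$ in the denominator (second one), leave only $1/\Gamma(\tfrac{\beta}{2}(b+m))$; the $\Gamma(1+\tfrac{\beta}{2}j)$ terms and the powers of $\Gamma(1+\tfrac{\beta}{2})$ cancel outright; and the surviving numerator factors $\Gamma(1+\tfrac{\beta}{2}(j+1))$, together with the $A$-product's $\Gamma(1+(i+1)\tfrac{\beta}{2})/\Gamma(k+(i+1)\tfrac{\beta}{2})$, telescope against the denominator Gammas of the two Selberg constants, leaving exactly $\Gamma(1+\tfrac{\beta}{2})/(\Gamma(k)\Gamma(k+\tfrac{\beta}{2}))$, $m\,\Gamma(k+\tfrac{\beta}{2}m)/\Gamma(1+\tfrac{\beta}{2}m)$, and $\Gamma(k+\tfrac{\beta}{2}(b+m))/\Gamma(\tfrac{\beta}{2}(b+m))$.

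I expect the main obstacle to be purely bookkeeping in this last step: the three products run over slightly different index sets ($1,\dots,m-1$ versus $1,\dots,m$, and there is the shift $i\leftrightarrow i+1$ hidden inside $A$), so off-by-one slips and misaligned Gamma arguments are the real danger rather than anything conceptual. I would guard against this with two independent sanity checks on \eqref{const}. At $k=1$ the series in \eqref{due} collapses to $1$, $A_{m,b,\beta,1}=1$, and \eqref{const} gives $W_{m,b,\beta,1}=\tfrac{\beta}{2}m(b+m)$, for which $\int_0^1 \tfrac{\beta}{2}m(b+m)(1-\lambda)^{\frac{\beta}{2}m(b+m)-1}\,d\lambda=1$ as a density must. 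At $\beta=2$ with $k\in\mathbb{N}_+$, \eqref{due} should reduce to the classical complex-Jacobi (Schur-polynomial) smallest-eigenvalue density, giving a second check on the Gamma arithmetic.
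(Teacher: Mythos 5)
Your proposal is correct and follows essentially the same route as the paper: specialize \eqref{primo} at $a=2k/\beta-1$, apply Forrester's (13.7) to get the terminating ${}_2F_1^{4/\beta}$ in the argument $\{1-\lambda\}^{k-1}$, flip to $\{\lambda\}^{k-1}$ via Proposition 13.1.7 with the denominator evaluated by (13.14) as $A_{m,b,\beta,k}$, and then cancel the Selberg constants to reach \eqref{const}; your bookkeeping of the surviving Gamma factors agrees with the paper's (the only quibble is that the $\Gamma(1+\tfrac{\beta}{2}j)$ and $\Gamma(1+\tfrac{\beta}{2})$ families do not cancel ``outright'' but leave the residual $\Gamma(1+\tfrac{\beta}{2})/\Gamma(1+\tfrac{\beta}{2}m)$ that correctly appears in your final expression). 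The $k=1$ normalization check is a nice addition not present in the paper.
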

As in the previous case, we can now start to analyze the asymptotics in two regimes.

\begin{itemize}
\item[\textbf{Regime 1:}]  $m$ fixed, $b \rightarrow \infty$. Using \eqref{fa1} and \eqref{f1}, we can obtain the distribution of the scaled eigenvalue $y = (b+m) \lambda$, as follows. 

\begin{theorem} \label{b1}
As $b \rightarrow \infty$ while $m$ is kept fixed, the limiting distribution for the scaled smallest eigenvalue ($(b+m) \lambda$) of the $\beta$-Jacobi ensemble with $\beta(a+1)/2 = k \in \mathbb{N}_+$ is given by  
\begin{eqnarray*}
f_m(y) & = & \frac{\left ( \frac{\beta}{2} \right)^{k} \Gamma \left ( 1+ \frac{\beta}{2} \right) }{\Gamma(k) \Gamma \left ( k + \frac{\beta}{2} \right) }  ~\cdot~ \frac{m \Gamma \left ( k + \frac{\beta}{2}m \right)}{\Gamma \left (1 + \frac{\beta}{2} m \right)} ~y^{k-1} ~e^{-\beta my/2} ~ _1F_1^{4/\beta}(1-m; 2+ \frac{2}{\beta}(k-1); \{-y\}^{k-1})~.
\end{eqnarray*}
\end{theorem}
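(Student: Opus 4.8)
The plan is to push the exact formula of Theorem~\ref{gen_2} to the limit $b\to\infty$ with $m$ and $k$ fixed, after the substitution $\lambda=y/(b+m)$ that realizes the scaled variable $y=(b+m)\lambda$. Since the density of $y$ is $(b+m)^{-1}f_m\bigl(y/(b+m)\bigr)$, I would split the right-hand side of \eqref{due} into four $b$-dependent blocks and handle each with one of the standard ingredients already recorded: the Gamma-ratio estimate \eqref{f1}, the elementary limit $(1-y/N)^{cN}\to e^{-cy}$, and the confluence \eqref{fa1} for the multivariate ${}_2F_1$ (applied with parameter $4/\beta$). The four blocks are: (i) the prefactor $W_{m,b,\beta,k}$ of \eqref{const}; (ii) the power $\lambda^{k-1}=(b+m)^{-(k-1)}y^{k-1}$; (iii) the factor $(1-\lambda)^{\frac{\beta}{2}m(b+m)-1}$; and (iv) the terminating hypergeometric ${}_2F_1^{4/\beta}\bigl(1-m,\,-m-b+1;\,2+\tfrac{2}{\beta}(k-1);\,\{\lambda\}^{k-1}\bigr)$.

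Blocks (i)--(iii) are routine. Writing $N=b+m$, block (iii) gives $(1-y/N)^{\frac{\beta}{2}mN-1}\to e^{-\beta my/2}$, uniformly for $y$ in a compact set. In \eqref{const} the only $b$-dependent factor of $W_{m,b,\beta,k}$ is $\Gamma\bigl(k+\tfrac{\beta}{2}(b+m)\bigr)/\Gamma\bigl(\tfrac{\beta}{2}(b+m)\bigr)$, which by \eqref{f1} (with $x=\tfrac{\beta}{2}(b+m)$ and $\alpha=k$) is asymptotic to $\bigl(\tfrac{\beta}{2}(b+m)\bigr)^{k}$; multiplying by the Jacobian $(b+m)^{-1}$ and by the $(b+m)^{-(k-1)}$ of block (ii) gives a net $(b+m)^{-k}$, so the whole $b$-dependent constant tends to $(\tfrac{\beta}{2})^{k}$ and the surviving constant is exactly $\frac{(\beta/2)^{k}\Gamma(1+\beta/2)}{\Gamma(k)\Gamma(k+\beta/2)}\cdot\frac{m\Gamma(k+\frac{\beta}{2}m)}{\Gamma(1+\frac{\beta}{2}m)}$, the prefactor in Theorem~\ref{b1}.

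Block (iv) needs the most care, and is where \eqref{fa1} enters. Because that series terminates---it is a polynomial in $\lambda$ of degree at most $(m-1)(k-1)$---it suffices to check convergence coefficient by coefficient. For a partition $\kappa$, Definition~\ref{pocch_jk} gives $(-m-b+1)_{\kappa}^{4/\beta}=\prod_{i}\bigl(-m-b+1-\tfrac{2}{\beta}(i-1)\bigr)_{k_i}$, and dividing each classical rising factorial by $(b+m)^{k_i}$ shows $(-m-b+1)_{\kappa}^{4/\beta}\,(b+m)^{-|\kappa|}\to(-1)^{|\kappa|}$ as $b\to\infty$; homogeneity of the Jack polynomials gives $C_{\kappa}^{4/\beta}\bigl(\{y/(b+m)\}^{k-1}\bigr)=(b+m)^{-|\kappa|}\,C_{\kappa}^{4/\beta}\bigl(\{y\}^{k-1}\bigr)$, while $(1-m)_{\kappa}^{4/\beta}$ and $\bigl(2+\tfrac{2}{\beta}(k-1)\bigr)_{\kappa}^{4/\beta}$ do not depend on $b$. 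Combining, the factors $(b+m)^{\pm|\kappa|}$ cancel and, using $(-1)^{|\kappa|}C_{\kappa}^{4/\beta}(\{y\}^{k-1})=C_{\kappa}^{4/\beta}(\{-y\}^{k-1})$, each $\kappa$-term converges to its analogue in ${}_1F_1^{4/\beta}\bigl(1-m;\,2+\tfrac{2}{\beta}(k-1);\,\{-y\}^{k-1}\bigr)$; this is exactly \eqref{fa1} with large parameter $a=-m-b+1$ and argument $X/a$, $X=\{-y\}^{k-1}$. Assembling blocks (i)--(iv) and collecting constants gives the formula of Theorem~\ref{b1}.

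The main obstacle I anticipate is purely bookkeeping: tracking that all powers of $(b+m)$---from $W_{m,b,\beta,k}$, from $\lambda^{k-1}$, from the Jacobian, and from the Pochhammer--Jack cancellation in block (iv)---balance, and that the shift $-m-b+1=-(b+m)+1$ is absorbed so that the sign $(-1)^{|\kappa|}$ turns $\{y\}^{k-1}$ into $\{-y\}^{k-1}$ in the confluent limit. No analytic difficulty arises, since the hypergeometric is a finite sum so the limit passes through term by term, and all the convergences above are uniform for $y$ in compact subsets of $(0,\infty)$, which promotes the pointwise convergence of densities to the asserted limiting distribution.
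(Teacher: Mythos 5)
Your proposal is correct and follows exactly the route the paper takes: substitute $\lambda=y/(b+m)$ into the exact formula \eqref{due}, apply the Gamma-ratio asymptotic \eqref{f1} to the $b$-dependent factor of $W_{m,b,\beta,k}$, use $(1-y/(b+m))^{\frac{\beta}{2}m(b+m)-1}\to e^{-\beta my/2}$, and invoke the confluence \eqref{fa1} for the terminating ${}_2F_1^{4/\beta}$, with the powers of $(b+m)$ cancelling as you describe. In fact your write-up supplies more justification than the paper (which simply cites \eqref{fa1} and \eqref{f1}), in particular the term-by-term verification of the confluent limit via homogeneity of the Jack polynomials and the sign flip producing $\{-y\}^{k-1}$.
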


Note that in the case $\beta = 2$, $k = 1$, which is the case 2 of the \textbf{MP}, the hypergeometric above simply becomes a constant which cancels the more complicated terms of the normalization, and yields that 
\[
f_m(y) = m e^{-my/2}~, 
\]
which is consistent with the results of \cite{jiang09a} and \cite{edelman89a}. 

\item[\textbf{Regime 2:}] $m, ~(b+m) \rightarrow \infty$. This covers the case when $b$ is actually fixed. Again, using \eqref{fa1}, \eqref{fa2}, and \eqref{f1}, the distribution of the scaled eigenvalue $y = m (b+m) \lambda$ is given below.

\begin{theorem} \label{b2}
As $m, (b+m) \rightarrow \infty$, the limiting distribution for the scaled smallest eigenvalue ($m (b+m) \lambda$) of the $\beta$-Jacobi ensemble with $\beta(a+1)/2 = k \in \mathbb{N}_+$ is given by  
\begin{eqnarray*}
f(y) & = & \frac{\left ( \frac{\beta}{2} \right)^{2k-1} \Gamma \left ( 1+ \frac{\beta}{2} \right) }{\Gamma(k) \Gamma \left ( k + \frac{\beta}{2} \right) }~\cdot~ y^{k-1}~e^{-\beta y/2}~  _0F_1^{4/\beta}(2+ \frac{2}{\beta}(k-1); \{y\}^{k-1})~.
\end{eqnarray*}
\end{theorem}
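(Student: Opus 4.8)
The plan is to begin from the exact density \eqref{due} of Theorem~\ref{gen_2}, rescale, and pass to the limit factor by factor. Set $\lambda=y/\big(m(b+m)\big)$, so the density of the rescaled variable $y=m(b+m)\lambda$ is $f(y)=\tfrac{1}{m(b+m)}\,f_m\!\big(y/(m(b+m))\big)$. Substituting \eqref{due} splits $f(y)$ into three pieces, whose limits I would take separately: (i) the constant $W_{m,b,\beta,k}$ of \eqref{const} together with the Jacobian $\tfrac{1}{m(b+m)}$ and the monomial $\lambda^{k-1}$; (ii) the power $(1-\lambda)^{\beta m(b+m)/2-1}$; and (iii) the terminating hypergeometric ${}_2F_1^{4/\beta}\big(1-m,\,-m-b+1;\,2+\tfrac{2}{\beta}(k-1);\,\{\lambda\}^{k-1}\big)$.

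For (i) I would use the Stirling-type limit \eqref{f1}: with $\alpha=k-1$ fixed it gives $\Gamma\!\big(k+\tfrac{\beta}{2}m\big)/\Gamma\!\big(1+\tfrac{\beta}{2}m\big)\sim\big(\tfrac{\beta}{2}m\big)^{k-1}$ as $m\to\infty$, and with $\alpha=k$ fixed it gives $\Gamma\!\big(k+\tfrac{\beta}{2}(b+m)\big)/\Gamma\!\big(\tfrac{\beta}{2}(b+m)\big)\sim\big(\tfrac{\beta}{2}(b+m)\big)^{k}$ as $b+m\to\infty$; hence $W_{m,b,\beta,k}\sim\tfrac{\Gamma(1+\beta/2)}{\Gamma(k)\Gamma(k+\beta/2)}\big(\tfrac{\beta}{2}\big)^{2k-1}m^{k}(b+m)^{k}$. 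Multiplying by $\lambda^{k-1}/(m(b+m))=y^{k-1}/(m(b+m))^{k}$ cancels every power of $m$ and of $b+m$ and leaves $\tfrac{\Gamma(1+\beta/2)}{\Gamma(k)\Gamma(k+\beta/2)}\big(\tfrac{\beta}{2}\big)^{2k-1}y^{k-1}$. For (ii), $\big(1-\tfrac{y}{m(b+m)}\big)^{\beta m(b+m)/2-1}\to e^{-\beta y/2}$, and this uses only $m(b+m)\to\infty$, with no constraint on the relative rates of $m$ and $b+m$ — which is exactly what makes the theorem hold for joint limits taken in any way.

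The substance is (iii). Both upper parameters $1-m$ and $-m-b+1$ tend to infinity (to $-\infty$), and since $\lambda=y/(m(b+m))$ one has $\{\lambda\}^{k-1}=\tfrac{1}{(1-m)(-m-b+1)}\{(1-m)(-m-b+1)\lambda\}^{k-1}$ with $(1-m)(-m-b+1)\lambda\to y$, because $(1-m)(-m-b+1)/\big(m(b+m)\big)\to 1$. This is precisely the rescaling in \eqref{fa2} (with Jack parameter $4/\beta$ in place of the $\beta$ there), so the hypergeometric tends to ${}_0F_1^{4/\beta}\big(2+\tfrac{2}{\beta}(k-1);\,\{y\}^{k-1}\big)$. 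To make this rigorous I would go termwise on the terminating series: for a partition $\kappa$ with at most $k-1$ parts, homogeneity of the Jack polynomials gives $C_\kappa^{4/\beta}(\{\lambda\}^{k-1})=\lambda^{|\kappa|}C_\kappa^{4/\beta}(I_{k-1})$; each of $(1-m)_\kappa^{4/\beta}$ and $(-m-b+1)_\kappa^{4/\beta}$ is a product of $|\kappa|$ factors linear in $m$, resp.\ $m+b$, with leading coefficient $-1$, so their product divided by $\big(m(b+m)\big)^{|\kappa|}$ tends to $1$; and the denominator symbol $\big(2+\tfrac{2}{\beta}(k-1)\big)_\kappa^{4/\beta}$ stays nonzero (indeed positive) for all such $\kappa$. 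Hence each term converges to the matching term of ${}_0F_1^{4/\beta}$, and the uniform bound $|(1-m)_\kappa^{4/\beta}|\le(2m)^{|\kappa|}$ for $m$ large (and its analogue in $b+m$) shows the series is dominated by ${}_0F_1^{4/\beta}$ at a fixed argument, so dominated convergence carries the limit through.

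Multiplying the three limits produces the stated $f(y)$. The main obstacle is precisely (iii): because the two parameters run to $-\infty$ rather than $+\infty$, \eqref{fa2} cannot be quoted verbatim and its proof must be rerun with the signs tracked, as above (in the staged form of that proof one first uses \eqref{fa1} to pass to a ${}_1F_1^{4/\beta}$); everything else reduces to \eqref{f1} and the elementary exponential limit. A last routine point: to read the pointwise limit of the densities $f_m$ as the density of the limiting law, one notes that each $f_m$ integrates to $1$ and, by the same dominating bound (equivalently Scheffé's lemma), so does $f$, which promotes the pointwise convergence to convergence in distribution.
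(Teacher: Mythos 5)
Your proposal follows the same route as the paper: start from the exact density of Theorem \ref{gen_2}, apply the gamma-ratio asymptotic \eqref{f1} to the constant $W_{m,b,\beta,k}$, the elementary exponential limit to $(1-\lambda)^{\beta m(b+m)/2-1}$, and the confluence \eqref{fa1}--\eqref{fa2} to the terminating ${}_2F_1^{4/\beta}$, and your bookkeeping of the powers of $m$ and $b+m$ is correct. The only place you go beyond the paper is in justifying the use of \eqref{fa2} when both upper parameters tend to $-\infty$ rather than $+\infty$; your termwise argument with the dominated-convergence bound is a legitimate (and welcome) filling-in of a detail the paper leaves implicit.
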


Once again, when $\beta = 2$ and $k= 1$, the hypergeometric expression in the above is constant, and the answer is
\[f(y) = e^{-y}~.\]
Again, this is consistent with \cite{jiang09a} and \cite{edelman89a}. The advantage over \cite{jiang09a} is that when $b = n-2r$ and $m=r$, as is the case in the \textbf{MP}, there are no conditions over the growth to $\infty$ of $r$ and $n-r$; in \cite{jiang09a}, $r$ must be $o(n/\log n)$.

\end{itemize}

\section{Numerical Experiments} \label{numex}

We have tested numerically all the formulae obtained in the previous sections. 
All plots were obtained in MATLAB; to compute the hypergeometric functions involved we have used Koev's easy-to-use hypergeometric series package \cite{plamen_hyp, plamen_hyp2}. For the Monte Carlo tests, we have used the convenient to work with bidiagonal models for $\beta$-Jacobi distributions, given below:
\begin{eqnarray*}
J_{\beta, n, a,b} & = & B_{\beta, n, a, b} \cdot  B_{\beta, n, a, b}^{T}~, ~~~\mbox{where} \\
B_{\beta,n, a, b} & \sim & \left [ \begin{array}{ccccc} c_n & - s_{n} c'_{n-1} & & & \\
& c_{n-1} s'_{n-1} & -s_{n-1} c'_{n-2} & & \\
& & c_{n-2} s'_{n-2} & \ddots & \\
& & & \ddots & - s_2 c'_1 \\
& & & & c_1 s'_1 \end{array} \right ]~,
\end{eqnarray*} 
where all $c_i$'s and $c'_i$'s are independent variables distributed as follows:
\[
c_i \sim \sqrt{Beta \left ( \frac{\beta}{2}(a+i),~ \frac{\beta}{2}(b+i)\right)}~~, ~~~~c'_i \sim \sqrt{Beta \left ( \frac{\beta}{2}i,~\frac{\beta}{2}(a+b+1+i) \right)}~,
\]
and $s_i = \sqrt{1-c_i^2}$, for all $1 \leq i \leq n$, while $s'_i= \sqrt{1-{c'_i}^2}$, for al $1 \leq i \leq (n-1)$. Here $Beta(s,t)$ stands for the well-known distribution with pdf proportional to $x^{s-1}(1-x)^{t-1}$ on $[0,1]$.

This matrix model, together with a proof that the matrix $J_{\beta, n, a,b}$ has eigenvalue pdf given by the Jacobi ensemble with given parameters, is a beautiful result of Sutton's \cite{sutton05}. 

Figure \ref{fig_gen} is an illustration of Theorem \ref{general_min}. The solid red line corresponds to the exact distribution of the smallest eigenvalue, as given by the theorem, for the case when $\beta = 1.75$, $m=4$, $a = 2.3$, and $b = 2.5$.  The normalized histogram (obtained using the {\tt histnorm}) represents the results of $10,000$ Monte Carlo tests using the bidiagonal matrix models. 

\vspace{1cm}

\begin{figure}[!ht]
\begin{center}

\vspace{-4cm}

\includegraphics[height=4.5in]{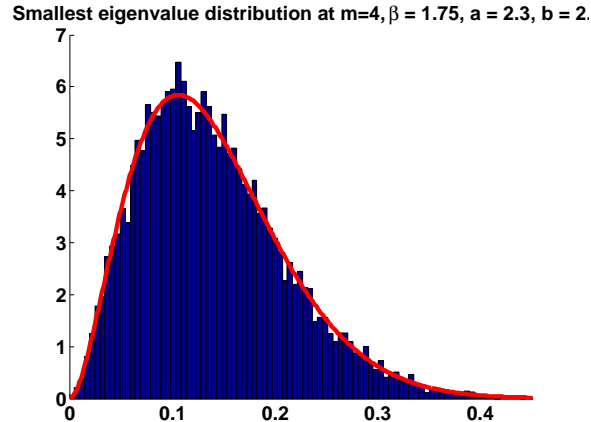}

\vspace{-3.5cm}

\caption{{\small The solid red line represents the theoretical distribution, while the normalized histogram represents the results of a Monte Carlo experiment with $10,000$ trials, using $J_{\beta, n, a, b}$.}} \label{fig_gen}
\end{center}

\end{figure}

\vspace{-.5cm}

Figures \ref{f_a1} and \ref{f_a2} represent a comparison between the theoretical results of \textbf{Case 1:} $\mathbf{a = \frac{2}{\beta} - 2}$, respectively, Theorems \ref{a1} and \ref{a2} (illustrated by the solid red lines), and Monte Carlo experiments with $10,000$, respectively $5,000$ trials (represented by the normalized histograms). Note that convergence occurs fairly quickly; for Figure \ref{f_a1}, which considers the case $b \rightarrow \infty$, the histogram of $\beta (b+m)/2 \lambda_{min}$ is very close to the plot of the asymptotical distribution, even though $b = 10$. For Figure \ref{f_a2}, we only need to take $b=5$ and $m=5$ to see how close the histogram of $\beta m (b+m)/2 \lambda_{min}$ is to the solid line representing the asymptotical distribution. For this latter case, also note the singularity at $0$; this is caused by the singularity of the Bessel function in the asymptotical formula, as given by Theorem \ref{a2}.


\begin{figure}[!ht]
\begin{center}

\vspace{-3cm}

\includegraphics[height=4.5in]{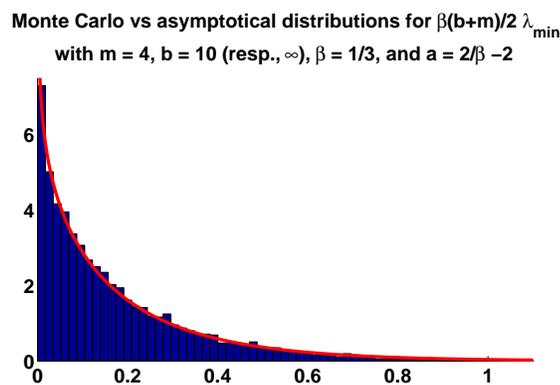}

\vspace{-3.5cm}

\caption{{\small The solid red line represents the asymptotical $(b = \infty)$ distribution, while the normalized histogram represents the results of a Monte Carlo experiment for $b = 10$, with $10,000$ trials.}} \label{f_a1}
\end{center}

\end{figure}

\begin{figure}[!ht]
\begin{center}

\vspace{-3cm}

\includegraphics[height=4.5in]{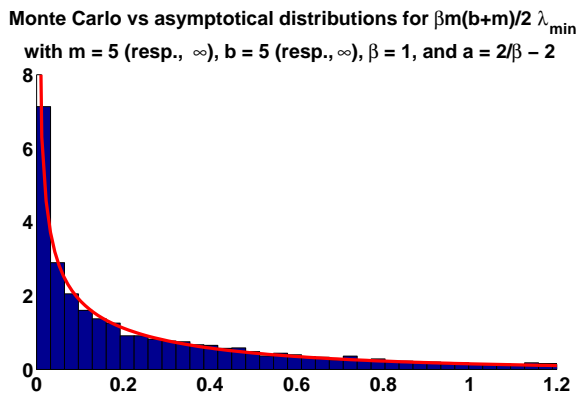}

\vspace{-3.5cm}

\caption{{\small The solid red line represents the asymptotical $(m, b = \infty)$ distribution, while the normalized histogram represents the results of a Monte Carlo experiment for $m=5$, $b = 5$, with $5,000$ trials.}} \label{f_a2}
\end{center}

\end{figure}

\pagebreak
Finally, figures \ref{f_b1} and \ref{f_b2} represent comparisons between the theoretical results of \textbf{Case 2:} $\mathbf{a = \frac{2k}{\beta} - 1}$, specifically, Theorems \ref{b1} and \ref{b2}. The exact distributions given by the theorems are represented by the solid red lines, while the histograms are the results of Monte Carlo tests with $10,000$ trials. This time, one has to increase $\beta$ to $50$ to obtain significant results; however, since the distributions represent asymptotics, that is still rather small. Figure \ref{f_b1} covers the case $\beta \rightarrow \infty$, that is, Theorem \ref{b1}, and the histogram is of the distribution on $(b+m) \lambda_{min}$, while Figure \ref{f_b2} covers the case $m, \beta \rightarrow \infty$ (Theorem \ref{b2}), and the histogram reflects the empirical distribution of $m(b+m) \lambda_{min}$.

\begin{figure}[!ht]
\begin{center}

\vspace{-3cm}

\includegraphics[height=4.5in]{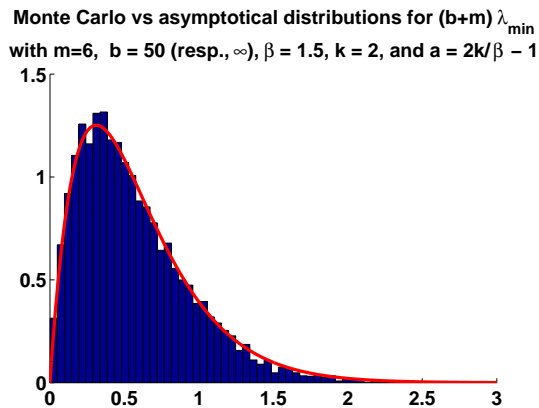}

\vspace{-3cm}

\caption{The solid red line represents the asymptotical $(b = \infty)$ distribution, while the normalized histogram represents the results of a Monte Carlo experiment for $b = 50$, with $10,000$ trials.} \label{f_b1}
\end{center}

\end{figure}

\begin{figure}[!ht]
\begin{center}

\vspace{-3cm}

\includegraphics[height=4.5in]{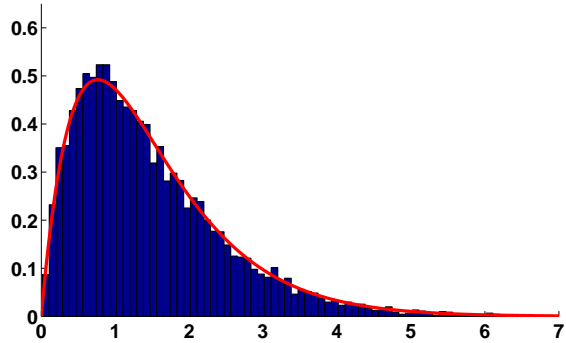}

\vspace{-3cm}

\caption{The solid red line represents the asymptotical $(m, b = \infty)$ distribution, while the normalized histogram represents the results of a Monte Carlo experiment for $m=15$, $b = 50$, with $10,000$ trials.} \label{f_b2}
\end{center}

\end{figure}

\section*{Acknowledgements} Ioana would like to thank James Demmel, Peter Forrester, Tiefeng Jiang, and Eric Rains for useful discussions. Ioana's work is supported by NSF CAREER Award DMS-0847661. 

\bibliography{bib_09_10}
\bibliographystyle{plain}

\end{document}